\def\ps@pprintTitle{%
 \let\@oddhead\@empty
 \let\@evenhead\@empty
 \def\@oddfoot{\centerline{\thepage}}%
 \let\@evenfoot\@oddfoot}
\newtheorem{thm}{Theorem}[section]
\newtheorem{proposition}[thm]{Proposition}
\newtheorem{lemma}[thm]{Lemma}
\newtheorem{definition}[thm]{Definition}
\newcommand{\bx}{\bm{x}}
\newcommand{\by}{\bm{y}}
\newcommand{\trace}{\mathrm{Tr}}
\newcommand{\da}{\downarrow}
\newcommand{\ua}{\uparrow}
\begin{document}

\begin{frontmatter}

\title{\textbf{Generality of Lieb's Concavity Theorem}}
\author{De Huang\fnref{myfootnote} }
\address{Applied and Computational Mathematics, California Institute of Technology, Pasadena, CA 91125, USA}
\fntext[myfootnote]{E-mail address:\ dhuang@caltech.edu.}
 
\begin{abstract}
We show that Lieb's concavity theorem holds more generally for any unitarily invariant matrix function $\phi:\mathbf{H}^n_+\rightarrow \mathbb{R}$ that is monotone and concave. Concretely, we prove the joint concavity of the function $(A,B) \mapsto\phi\big[(B^\frac{qs}{2}K^*A^{ps}KB^\frac{qs}{2})^{\frac{1}{s}}\big] $ on $\mathbf{H}_+^m\times\mathbf{H}_+^n$, for any $K\in \mathbb{C}^{m\times n},s\in(0,1],p,q\in[0,1], p+q\leq 1$. This result improves a recent work by Huang \cite{huang2019generalizing} for a more specific class of $\phi$.
\end{abstract}

\begin{keyword}
Lieb's concavity theorem, matrix functions, symmetric forms, majorization. 
\vspace{2mm}
\MSC[2010] 47A63, 15A42, 15A16
\end{keyword}

\end{frontmatter}

\section{Introduction}
Lieb's Concavity Theorem \cite{LIEB1973267}, as one of the most celebrated results in the study of trace inequalities, states that the function 
\begin{equation}\label{eqt:LCT}
(A,B)\ \longmapsto\ \trace[K^*A^pKB^q]
\end{equation}
is jointly concave on $\mathbf{H}_+^m\times\mathbf{H}_+^n$, for any $K\in \mathbb{C}^{m\times n}$, $p,q\in(0,1], p+q\leq 1$. Here $\mathbf{H}_+^n$ is the convex cone of all $n \times n$ Hermitian, positive semidefinite matrices. Among rich consequences of the Lieb's concavity theorem, the concavity of the map $A\mapsto \trace[\exp(H+\log A)]$ on $\mathbf{H}_+^n$ and the three-matrix extension of the Golden-Thompson inequality, both also established by Lieb \cite{LIEB1973267}, are most acknowledged. As an important application, Lieb and Ruskai \cite{doi:10.1063/1.1666274} used these results to prove the strong subadditivity of quantum entropy. 

Since its original establishment, the concavity of \eqref{eqt:LCT} has been discussed from various perspectives and proved alternatively using, for example, the theory of Herglotz functions (Epstein \cite{epstein1973remarks}), quadratic interpolations (Uhlmann \cite{uhlmann1977relative}, Kosaki \cite{kosaki1982interpolation}) and matrix tensors (Ando \cite{ANDO1979203}, Carlen \cite{carlen2010trace}, Nikoufar et al. \cite{NIKOUFAR2013531}). Recently, Huang \cite{huang2019generalizing} generalized Lieb's result to the concavity of 
\begin{equation}\label{eqt:generalLCT}
(A,B) \ \longmapsto\ \phi\big((B^\frac{qs}{2}K^*A^{ps}KB^\frac{qs}{2})^{\frac{1}{s}}\big), 
\end{equation}
on $\mathbf{H}_+^m\times\mathbf{H}_+^n$ for any $K\in \mathbb{C}^{m\times n}$, $s\in(0,1],p,q\in(0,1],p+q\leq 1$, with $\phi(\cdot)=(\trace_k[\cdot])^\frac{1}{k},1\leq k\leq n$.
Here the $k$-trace $\trace_k(A)$ of a matrix $A\in \mathbb{C}^{n\times n}$ is defined as 
\[\trace_k(A) = \sum_{1\leq i_1<i_2<\cdots<i_k\leq m} \lambda_{i_1}\lambda_{i_2}\cdots \lambda_{i_k},\quad 1\leq k\leq n,\]
with $\bm{\lambda}(A)=(\lambda_1,\lambda_2,\dots,\lambda_n)$ being the eigenvalues of $A$, counting multiplicities. Huang's proof was a direct use of an operator interpolation technique by Stein \cite{stein1956interpolation}, and hence also provided a new proof of the original Lieb's concavity theorem. An application of Huang's on $k$-trace generalization is to derive concentration estimates on partial spectral sums of random matrices \cite{2018arXiv180805550H}, which extended Tropp's master bounds \cite{Tropp2012,MAL-048} from the largest (or smallest) eigenvalue to the sum of the $k$ largest (or smallest) eigenvalues. Later, Huang \cite{huang2019improvement} strengthened his result by showing that the map \eqref{eqt:generalLCT} is jointly concave for arbitrary $\phi:\mathbf{H}^n_+\rightarrow \mathbb{R}$ that is unitary invariant, monotone(monotone increasing with respect to L\"owner order), concave and satisfies H\"older's inequality, i.e. $\phi(|AB|)\leq \phi(|A|^p)^\frac{1}{p}\phi(|B|^q)^\frac{1}{q},\forall p,q\in[1,+\infty],\frac{1}{p}+\frac{1}{q}=1$. 

However, though the immediate operator interpolation arguments in Huang's previous proof require $\phi$ to satisfy H\"older's inequality, the final result actually does not. In this paper, we will further improve Huang's results by removing the H\"older's condition. More precisely, we will prove the concavity of \eqref{eqt:generalLCT} for \textbf{arbitrary $\phi:\mathbf{H}^n_+\rightarrow \mathbb{R}$ that is unitary invariant, monotone and concave}. We remark that for any symmetric function $\phi:\mathbb{R}^n_+\rightarrow \mathbb{R}$ that is monotone increasing (with respect to the standard vector partial order) and concave, its extension to $\mathbf{H}^n_+$ defined as $\phi(A)=\phi(\bm{\lambda}(A)),A\in \mathbf{H}^n_+$ is unitarily invariant, monotone and concave on $\mathbf{H}^n_+$. 

The proof of our further generalization will be based on an observation that, given any function $\mathcal{F}:\Omega\rightarrow \mathbf{H}^n$ from a convex set $\Omega$ to the space of all Hermitian matrices $\mathbf{H}^n$, 
\begin{align*}
&\text{$X\mapsto\phi(\mathcal{F}(X))$ is concave on $\Omega$ for arbitrary $\phi:\mathbf{H}^n_+\rightarrow \mathbb{R}$}\\
&\text{that is unitary invariant, monotone and concave $\mathbf{H}^n$,}
\end{align*} 
if and only if 
\[\text{$X\mapsto\sum_{i=1}^k\lambda^\ua_i(\mathcal{F}(X))$ is concave on $\Omega$ for all $1\leq k\leq n$,}\]
where $\lambda^\ua_i(A)$ denotes the $i_{\text{th}}$ smallest eigenvalue of $A\in\mathbf{H}^n$. That is to say, we only need to prove the concavity of $\eqref{eqt:generalLCT}$ for $\phi(X)=\sum_{i=1}^k\lambda^\ua_i(\mathcal{F}(X)),1\leq k\leq n$. This strategy shares the spirit of Ky Fan's dominance theorem (e.g. see Theorem 7.4.8.4 in \cite{horn2012matrix}): given any $A,B\in \mathbb{C}^{n\times n}$, $\|A\|\leq \|B\|$ for arbitrary unitarily invariant norm $\|\cdot\|$ if and only if the singular values of $A$ is weakly majorized by the singular values of $B$. A similar idea was adopted in a recent work by Hiai et al. \cite{hiai2017generalized}, in which they used majorization theories to show that, to prove a class of integral inequality for arbitrary unitarily invariant norm of Hermitian matrices requires only to prove it for every Ky Fan $k$-norm(sum of the $k$-largest eigenvalues). Inspired by their work, we will also use techniques of majorization to prove our preceding observation. Then we will prove the concavity of $\eqref{eqt:generalLCT}$ for $\phi(X)=\sum_{i=1}^k\lambda^\ua_i(\mathcal{F}(X)),1\leq k\leq n$ based on a new variational form of the sum the $k$ smallest eigenvalues: for any $f:\mathbb{R}_+\rightarrow\mathbb{R}$ that is monotone increasing and satisfies $f(0)=0$, we have
\[\sum_{i=1}^k \lambda^\ua_i\big(f(M^*AM)\big) 
= \inf_{\begin{subarray}{c}G\in\mathbb{C}^{n\times n},G^2=G\\\mathrm{rank}(G)=k\end{subarray}} \trace\big[f(M^*G^*AGM)\big],\quad \text{for any}\ A\in \mathbf{H}_+^n,M\in \mathbb{C}^{n\times n}.\]

\subsection*{outline}
The rest of the paper is organized as follows. \Cref{sec:Notations&MainResults} is devoted to introductions of general notations, the notion of symmetric forms and our main results. We will briefly review in \Cref{sec:Majorization} the theories of majorization and use them to prove a useful equivalence theorem. The proofs of our main theorems are presented in \Cref{sec:Proofs}. 

\section{Notations and Main Results}
\label{sec:Notations&MainResults}

\subsection{General conventions}
For any positive integers $n,m$, we write $\mathbb{C}^n$ for the $n$-dimensional complex vector spaces equipped with the standard $l_2$ inner products, and $\mathbb{C}^{m\times n}$ for the space of all complex matrices of size $m\times n$. Let $\mathbb{R}^n,\mathbb{R}_+^n,\mathbb{R}_{++}^n$ be $(-\infty,+\infty)^n,[0,+\infty)^n,(0,+\infty)^n$, respectively. Let $\mathbf{H}^n$ be the space of all $n\times n$ Hermitian matrices, $\mathbf{H}_+^n$ be the convex cone of all $n\times n$ Hermitian, positive semi-definite matrices, and $\mathbf{H}_{++}^n$ be the convex cone of all $n\times n$ Hermitian, positive definite matrices. We write $I_n$ for the identity matrix of size $n \times n$. We use $S_n$ to denote the symmetric group of all permutations of order $n$. 

For any $\bx=(x_1,\dots,x_n),\by=(y_1,\dots,y_n)\in \mathbb{R}^n$, we write $\bx+\by$ and $\bx\by$ for the entry-wise sum and entry-wise product respectively, i.e.
\[\bx+\by=(x_1+y_1,\dots,x_n+y_n),\quad \bx\by = (x_1y_1,\dots,x_ny_n).\]
We say $\bx\leq \by$ if $x_i\leq y_i,i=1,\dots,n$. We will denote by $\bx^\da$ and $\bx^\ua$ the descending reordering and ascending reordering of $x$, respectively. That is, there exist some permutations $P_1,P_2\in S_n$ such that $\bx^\da = P_1\bx,\bx^\ua=P_2\bx$, and 
\[x^\da_1\geq x^\da_2 \geq \cdots\geq x^\da_n,\quad x^\ua_1\leq x^\ua_2 \leq \cdots\leq x^\ua_n.\]
For any function scalar function $f:\mathbb{R}\rightarrow\mathbb{R}$, the extension of $f$ to a function from $\mathbb{R}^n$ to $\mathbb{R}^n$ is given by
\[f(\bx) = (f(x_1),\dots,f(x_n)),\quad x\in \mathbb{R}^n. \]

For any $A\in \mathbf{H}^n$, we use $\lambda_1(A),\lambda_2(A),\dots,\lambda_n(A)$ to denote all the eigenvalues of $A$ and write $\bm{\lambda}(A) = (\lambda_1(A),\dots,\lambda_n(A))\in\mathbb{R}^n$. We will be frequently using $\bm{\lambda}^\da(A)$ and $\bm{\lambda}^\ua(A)$ as the descending ordering and ascending ordering, respectively, of the eigenvalues of $A$, i.e. $\lambda^\da_i(A)$ is the $i_{\text{th}}$ largest eigenvalue of $A$, and $\lambda^\ua_i(A)$ is the $i_{\text{th}}$ smallest eigenvalue of $A$. For any scalar function $f:\mathbb{R}\rightarrow\mathbb{R}$, the extension of $f$ to a function from $\mathbf{H}^n$ to $\mathbf{H}^n$ is given by 
\[f(A)=\sum_{i=1}^nf(\lambda_i(A))u_iu_i^*, \quad A\in \mathbf{H}^n,\]
where $u_1,u_2,\cdots,u_n\in\mathbb{C}^n$ are the corresponding normalized eigenvectors of $A$. Then obviously, the spectrum of $f(A)$ is $f(\bm{\lambda}(A))$; and if $f$ is monotone increasing on $\mathbb{R}$, then $\lambda^\da_i(f(A)) = f(\lambda^\da_i(A))$. One can find more discussions and analysis on matrix functions in \cite{carlen2010trace,Vershynina:2013}.

\subsection{Symmetric forms}
We start with symmetric functions on $\mathbb{R}^n$ defined as follows.
\begin{definition}\label{def:SymmetricForm}
A function $\phi:\mathbb{R}^n\rightarrow \mathbb{R}$ is a \textbf{symmetric form} if it is invariant under permutation:
\[\text{$\phi(\bx) = \phi(P\bx)$ for any $\bx\in \mathbb{R}^n_+$ and any permutation $P\in S_n$.}\]
A symmetric form $\phi$ is \textbf{monotone} (increasing) if 
\[ \text{$\bx\geq \by$ implies $\phi(\bx)\geq \phi(\by)$, for any $\bx,\by\in\mathbb{R}^n$.}\]
A symmetric form $\phi$ is \textbf{convex}, if 
\[\text{$\phi(\tau \bx+(1-\tau)\by)\leq \tau \phi(\bx)+(1-\tau)\phi(\by)$, for any $\bx,\by\in \mathbb{R}^n_+$ and any $\tau\in[0,1]$.}\]
A symmetric form $\phi$ is \textbf{concave} if $-\phi$ is convex.
\end{definition}

The domain of a symmetric form $\phi$ can be naturally extended from $\mathbb{R}^n$ to $\mathbf{H}^n$, by feeding $\phi$ the eigenvalues of a matrix in $\mathbf{H}^n$. 

\begin{definition}\label{def:MatrixForm}
The extension of a symmetric form $\phi$ to $\mathbf{H}^n$ is defined as
\[\phi(A) = \phi(\bm{\lambda}(A)),\quad A\in \mathbf{H}^n.\]
\end{definition}

\begin{proposition}\label{Prop:ExtensionProperties}
Let $\phi$ be a symmetric form on $\mathbb{R}^n$, then its extension to $\mathbf{H}^n$ is unitarily invariant: 
\[ \text{$\phi(U^*AU) = \phi(A)$, for any $A\in \mathbf{H}^n$ and any unitary matrix $U\in \mathbb{C}^{n\times n}$.}\]
If $\phi$ is monotone, then its extension to $\mathbf{H}^n$ is monotone with respect to L\"owner order:
\[\text{$A\succeq B$ implies $\phi(A)\geq\phi(B)$, for any $A,B\in\mathbf{H}^n$.}\] 
If $\phi$ is convex, then its extension to $\mathbf{H}^n$ is convex:
\[\text{$\phi(\tau A+(1-\tau)B)\leq \tau\phi(A)+(1-\tau)\phi(B)$, for any $A,B\in\mathbf{H}^n$ and any $\tau\in[0,1]$.} \] 
\end{proposition}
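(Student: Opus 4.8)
The plan is to dispatch the three claims in order of increasing difficulty, the first two being essentially immediate and the convexity statement carrying the real content. For unitary invariance, note that $U^*AU$ is similar to $A$ and hence has the same multiset of eigenvalues, so $\bm{\lambda}(U^*AU)$ is a permutation of $\bm{\lambda}(A)$; since $\phi$ is permutation invariant, $\phi(U^*AU)=\phi\big(\bm{\lambda}(U^*AU)\big)=\phi\big(\bm{\lambda}(A)\big)=\phi(A)$. The same permutation invariance lets me write $\phi(A)=\phi\big(\bm{\lambda}^\da(A)\big)=\phi\big(\bm{\lambda}^\ua(A)\big)$ for any $A\in\mathbf{H}^n$, which I use freely below.

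For monotonicity under the L\"owner order, suppose $A\succeq B$. The Courant--Fischer min--max characterization of eigenvalues (equivalently, Weyl's monotonicity principle) gives $\lambda^\da_i(A)\geq\lambda^\da_i(B)$ for every $1\leq i\leq n$, i.e. $\bm{\lambda}^\da(A)\geq\bm{\lambda}^\da(B)$ entrywise. Since $\phi$ is a monotone symmetric form, $\phi(A)=\phi\big(\bm{\lambda}^\da(A)\big)\geq\phi\big(\bm{\lambda}^\da(B)\big)=\phi(B)$.

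The convexity claim I would prove by combining two classical facts about majorization with the convexity of $\phi$ on $\mathbb{R}^n$. First, the Ky Fan maximum principle, $\sum_{i=1}^k\lambda^\da_i(C)=\max\big\{\trace[\Pi C]:\Pi=\Pi^*=\Pi^2,\ \mathrm{rank}(\Pi)=k\big\}$ for $C\in\mathbf{H}^n$, exhibits $\sum_{i=1}^k\lambda^\da_i(\cdot)$ as subadditive, and it is additive at $k=n$; hence for $\tau\in[0,1]$ one gets the eigenvalue majorization $\bm{\lambda}^\da\big(\tau A+(1-\tau)B\big)\prec\tau\bm{\lambda}^\da(A)+(1-\tau)\bm{\lambda}^\da(B)$. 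Second, a symmetric convex form is Schur-convex: by the Hardy--Littlewood--P\'olya theorem, $\bx\prec\by$ implies $\bx=\sum_{P\in S_n}\mu_P P\by$ for some weights $\mu_P\geq0$ with $\sum_P\mu_P=1$, so $\phi(\bx)\leq\sum_P\mu_P\phi(P\by)=\phi(\by)$ by convexity and permutation invariance. Chaining these,
\[\phi\big(\tau A+(1-\tau)B\big)=\phi\big(\bm{\lambda}^\da(\tau A+(1-\tau)B)\big)\leq\phi\big(\tau\bm{\lambda}^\da(A)+(1-\tau)\bm{\lambda}^\da(B)\big)\leq\tau\phi\big(\bm{\lambda}^\da(A)\big)+(1-\tau)\phi\big(\bm{\lambda}^\da(B)\big)=\tau\phi(A)+(1-\tau)\phi(B),\]
where the first inequality is Schur-convexity applied to the above majorization and the second is convexity of $\phi$ on $\mathbb{R}^n$.

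The argument is modular, and the only delicate points are the two majorization facts invoked above; both are entirely standard (see, e.g., the majorization chapters of Bhatia or of Horn--Johnson \cite{horn2012matrix}), so no genuinely new obstacle arises and the proof is really a matter of assembling them correctly. Two minor remarks: the convexity chain only uses permutation invariance and convexity of $\phi$ on the set containing the relevant eigenvalue vectors and their convex combination, which lies in $\mathbb{R}^n_+$ whenever $A,B\succeq0$ --- the regime relevant to the sequel --- so the stated hypotheses suffice there; and since \Cref{sec:Majorization} reviews exactly this majorization machinery, one could equivalently postpone the convexity part to that section, though the self-contained version above is short enough to give here.
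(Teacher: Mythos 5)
Your proposal is correct and follows essentially the same route as the paper: unitary invariance and L\"owner monotonicity via invariance of the spectrum and Courant--Fischer, and convexity by combining the eigenvalue majorization $\bm{\lambda}(\tau A+(1-\tau)B)\prec\tau\bm{\lambda}(A)+(1-\tau)\bm{\lambda}(B)$ (which the paper proves exactly by the Ky Fan/Courant--Fischer subadditivity argument you cite) with Schur-convexity of convex symmetric forms via Birkhoff's theorem (the paper's \Cref{lem:MajorSymmetry}). Your side remark about the domain $\mathbb{R}^n_+$ versus $\mathbb{R}^n$ is a fair observation but does not change the argument.
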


The unitary invariance and the monotonicity inheriting property follow straightforward from definition. The proof of the convexity inheriting property requires the use of majorization between eigenvalues. We hence divert the proof of \Cref{Prop:ExtensionProperties} to \Cref{sec:Majorization}. Due to the inheriting properties, in what follows we will not distinguish between a symmetric form and its extension to Hermitian matrices. We remark that, in many cases, the domain of a symmetric may be restricted to smaller regions that are permutatively invariant (e.g. $\mathbb{R}_+^n,\mathbb{R}_{++}^n$) or unitarily invariant (e.g. $\mathbf{H}_+^n,\mathbf{H}_{++}^n$), for effectiveness of monotonicity or convexity.

Generally, if a symmetric form $\phi$ is convex, homogeneous of order 1 and positive definite, i.e. 
\[\phi(\bx)=0\Longleftrightarrow \bx = (0,0,\dots,0),\]
then $\phi$ is called a symmetric gauge function. A famous bijection theory of von Neumann \cite{von1937some} says that any unitarily invariant matrix norm on $\mathbf{H}^n$ is the extension of some symmetric gauge function on $\mathbb{R}^n$. In this paper, however, our main results are most related to symmetric forms that are monotone and concave. Some examples of such class of symmetric forms are listed below.
\begin{enumerate}
\item The k-trace introduced in \cite{2018arXiv180805550H}:
\[\trace_k[\bx]^\frac{1}{k} = \left(\sum_{1\leq i_1<i_2<\cdots<i_k\leq n} x_{i_1}x_{i_2}\cdots x_{i_k}\right)^\frac{1}{k},\quad \bx\in\mathbb{R}_+^n,\quad 1\leq k\leq n.\]
\item The sum of rotated partial geometric means:
\[g_k(\bx) = \sum_{1\leq i_1<i_2<\cdots<i_k\leq n} (x_{i_1}x_{i_2}\cdots x_{i_k})^\frac{1}{k},\quad \bx\in\mathbb{R}_+^n,\quad 1\leq k\leq n.\]
\item The semi $p$-norm for $p\in(-\infty,0)\cup(0,1]$:
\[\|\bx\|_p = \left(\sum_{i=1}^n x_i^p\right)^\frac{1}{p},\quad \bx\in \mathbb{R}_+^n.\]
\item The weighted sum biased to smaller entries: given any $\bm{a}\in \mathbb{R}^n$,
\[\langle \bm{a}^\da,\bx^\ua\rangle =\sum_{i=1}^ka^\da_i x^\ua_i,\quad \bx\in \mathbb{R}^n.\]
In particular, the sum of the $k$ smallest entries:
\[\langle \mathbbm{1}_{\{i\leq k\}},\bx^\ua\rangle = \sum_{i=1}^k x^\ua_i,\quad \bx\in \mathbb{R}^n.\]
\end{enumerate}

Obviously, any positive combination of a collection of monotone, concave symmetric forms is still a monotone, concave symmetric forms. Also, we can generate many more monotone, concave symmetric forms by simply compositing with monotone, concave functions, as stated in the following proposition. 

\begin{proposition}\label{prop:Composition}
Let $\phi:\mathbb{R}^n\rightarrow\mathbb{R}$ be a symmetric form, and $f:\mathbb{R}\rightarrow\mathbb{R}$ be a function. 
\begin{itemize}
\item If $\phi$ is monotone, and $f$ is monotone increasing over $\mathrm{range}(\phi)$, then $f\circ \phi$ is a monotone symmetric form. If $\phi$ is convex, and $f$ is monotone increasing and convex over $\mathrm{conv}(\mathrm{range}(\phi))$, then $f\circ \phi$ is a convex symmetric form.
\item If $f$ is monotone increasing, and $\phi$ is monotone over $\mathrm{range}(f)^n$, then $\phi\circ f$ is a monotone symmetric form. If $f$ is convex, and $\phi$ is monotone and convex over $\mathrm{conv}(\mathrm{range}(f)^n)$, them $\phi\circ f$ is a convex symmetric form.
\end{itemize} 
\end{proposition}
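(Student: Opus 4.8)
The plan is to verify the two claimed properties for each of the two compositions directly, by chaining the defining inequalities, after first disposing of permutation invariance. For $f\circ\phi$, permutation invariance is immediate: $f(\phi(P\bx)) = f(\phi(\bx))$ for every $P\in S_n$ because $\phi$ is already a symmetric form. For $\phi\circ f$, one uses that the entrywise extension of $f$ commutes with coordinate permutations, i.e. $f(P\bx) = Pf(\bx)$, so that $\phi(f(P\bx)) = \phi(Pf(\bx)) = \phi(f(\bx))$. Hence both $f\circ\phi$ and $\phi\circ f$ are symmetric forms whenever they are well defined, and it remains to check monotonicity and convexity.

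For the first bullet, I would argue as follows. If $\phi$ is monotone and $\bx\geq\by$, then $\phi(\bx)\geq\phi(\by)$, and since $f$ is monotone increasing on $\mathrm{range}(\phi)$ we get $f(\phi(\bx))\geq f(\phi(\by))$, so $f\circ\phi$ is monotone. If instead $\phi$ is convex and $f$ is monotone increasing and convex on $\mathrm{conv}(\mathrm{range}(\phi))$, then for $\bx,\by\in\mathbb{R}^n_+$ and $\tau\in[0,1]$ we bound $\phi(\tau\bx+(1-\tau)\by)\leq\tau\phi(\bx)+(1-\tau)\phi(\by)$, apply monotonicity of $f$ on the interval $\mathrm{conv}(\mathrm{range}(\phi))$ (which contains both sides), and then convexity of $f$, to obtain
\[ f\big(\phi(\tau\bx+(1-\tau)\by)\big)\leq f\big(\tau\phi(\bx)+(1-\tau)\phi(\by)\big)\leq \tau f(\phi(\bx))+(1-\tau)f(\phi(\by)). \]

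The second bullet has the same structure with the roles reversed. Since $f$ is monotone increasing, $\bx\geq\by$ forces $f(\bx)\geq f(\by)$ entrywise, and both vectors lie in $\mathrm{range}(f)^n$, so monotonicity of $\phi$ there gives $\phi(f(\bx))\geq\phi(f(\by))$. For convexity, with $f$ convex and $\phi$ monotone and convex on $\mathrm{conv}(\mathrm{range}(f)^n)$, the entrywise convexity of $f$ gives $f(\tau\bx+(1-\tau)\by)\leq\tau f(\bx)+(1-\tau)f(\by)$; both of these vectors lie in $\mathrm{conv}(\mathrm{range}(f)^n)$ (the left-hand one even in $\mathrm{range}(f)^n$), so applying monotonicity and then convexity of $\phi$ yields
\[ \phi\big(f(\tau\bx+(1-\tau)\by)\big)\leq\phi\big(\tau f(\bx)+(1-\tau)f(\by)\big)\leq\tau\phi(f(\bx))+(1-\tau)\phi(f(\by)). \]

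There is essentially no analytic difficulty here: the entire argument is a two-step chaining of inequalities. The only point requiring care is domain bookkeeping — one must check that the intermediate points produced by distributing the convex combination (namely $\tau\phi(\bx)+(1-\tau)\phi(\by)$ in the first case and $\tau f(\bx)+(1-\tau)f(\by)$ in the second) actually lie in the set on which the outer map is assumed monotone and convex, which is precisely why the hypotheses are stated with $\mathrm{conv}(\mathrm{range}(\cdot))$ rather than $\mathrm{range}(\cdot)$. Once that is noted, the proof is complete.
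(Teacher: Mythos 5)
Your proof is correct, and it is the routine chaining argument the paper implicitly has in mind: the paper states this proposition without proof, treating it as following directly from the definitions. You handle the one genuinely delicate point properly, namely that in the second bullet the entrywise bound $f(\tau\bx+(1-\tau)\by)\leq\tau f(\bx)+(1-\tau)f(\by)$ is only an inequality, so the monotonicity of $\phi$ on $\mathrm{conv}(\mathrm{range}(f)^n)$ is genuinely needed before its convexity can be invoked, and the intermediate points do lie in that convex hull as you check.
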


Note that the trace function $\trace$ is a monotone, convex and also concave symmetric form on $\mathbf{H}^n$. Then combining \Cref{Prop:ExtensionProperties} and \Cref{prop:Composition}, we can conclude that for any monotone increasing function $f$ on $\mathbb{R}$, $\trace[f(\cdot)]$ is monotone on $\mathbf{H}^n$; and for any convex (or concave) function $f$ on $\mathbb{R}$, $\trace[f(\cdot)]$ is convex (or concave) on $\mathbf{H}^n$. Therefore we have provided an alternative proof for Theorem 2.10 in \cite{carlen2010trace}. 

\subsection{Main Results}
Our main purpose is to generalize Lieb's concavity theorems from trace to symmetric forms that are monotone and concave. Huang \cite{huang2019generalizing} applied operator interpolations to obtain generalizations of Lieb's concavity to k-traces $\phi(x) = \trace_k[x]^\frac{1}{k}$, which he used to derive concentration estimates on partial spectral sums of random matrices \cite{2018arXiv180805550H}. Since the interpolation part of Huang's proof requires essentially the symmetry and H\"older property of the $k$-trace, his results can be extended to more general symmetric forms that are monotone, concave and satisfies H\"older's inequality \cite{huang2019improvement}. Even further, we find the H\"older property actually unnecessary, and by adopting techniques of majorization we can strengthen Huang's results to the following.  

\begin{thm}[General Lieb's Concavity Theorem]\label{thm:GeneralLiebConcavity}
Let $\phi$ be a symmetric form that is monotone and concave on $\mathbb{R}_+^n$. Then for any $K\in \mathbb{C}^{m\times n}$ and any $s\in(0,1],p,q\in[0,1],p+q\leq 1$, the function 
\begin{equation}
(A,B) \ \longmapsto\ \phi\big((B^\frac{qs}{2}K^*A^{ps}KB^\frac{qs}{2})^{\frac{1}{s}}\big) 
\label{eqt:function1}
\end{equation}
is jointly concave on $\mathbf{H}_+^m\times\mathbf{H}_+^n$. 
\end{thm}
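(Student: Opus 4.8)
The plan is to carry out the two reductions sketched in the introduction.

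\textbf{Step 1 (reduce to the forms $\sum_{i=1}^k x_i^\ua$).} I would first invoke the equivalence theorem proved in \Cref{sec:Majorization}: for any map $\mathcal F$ from a convex set $\Omega$ into $\mathbf H^n_+$, the composition $\phi\circ\mathcal F$ is concave on $\Omega$ for \emph{every} symmetric form $\phi$ that is monotone and concave on $\mathbb R^n_+$, provided only that $X\mapsto\sum_{i=1}^k\lambda^\ua_i(\mathcal F(X))$ is concave on $\Omega$ for every $1\le k\le n$. The family $\bm x\mapsto\sum_{i=1}^k x_i^\ua=\min_{|S|=k}\sum_{i\in S}x_i$ is itself monotone, symmetric and concave (a minimum of linear functionals), so these are genuinely special instances; the substance of the implication is that partial-sum concavity forces $\bm\lambda^\ua(\mathcal F(\tau X+(1-\tau)Y))$ to be weakly supermajorized by $\tau\bm\lambda^\ua(\mathcal F(X))+(1-\tau)\bm\lambda^\ua(\mathcal F(Y))$, and any monotone concave symmetric form, being monotone and Schur-concave, reverses weak supermajorization. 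Applying this with $\mathcal F(A,B)=(B^{qs/2}K^*A^{ps}KB^{qs/2})^{1/s}$, it suffices to show that for every fixed $k$ the map $(A,B)\mapsto\sum_{i=1}^k\lambda^\ua_i\big((B^{qs/2}K^*A^{ps}KB^{qs/2})^{1/s}\big)$ is jointly concave on $\mathbf H_+^m\times\mathbf H_+^n$.

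\textbf{Step 2 (variational principle plus trace Lieb).} For this I would apply the variational formula for the sum of the $k$ smallest eigenvalues with $f(x)=x^{1/s}$ (monotone increasing with $f(0)=0$, since $1/s\ge1$), taking the inner matrix $K^*A^{ps}K\in\mathbf H_+^n$ in the role of $A$ and $B^{qs/2}$ in the role of $M$:
\[\sum_{i=1}^k\lambda^\ua_i\big((B^\frac{qs}{2}K^*A^{ps}KB^\frac{qs}{2})^{\frac1s}\big)=\inf_{\begin{subarray}{c}G\in\mathbb C^{n\times n},\,G^2=G\\ \mathrm{rank}(G)=k\end{subarray}}\trace\big[(B^\frac{qs}{2}(KG)^*A^{ps}(KG)B^\frac{qs}{2})^{\frac1s}\big],\]
where I used $G^*K^*A^{ps}KG=(KG)^*A^{ps}(KG)$. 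The key point is that for each fixed idempotent $G$ the function under the infimum is again of ``trace Lieb'' form, with the constant matrix $K$ replaced by the constant matrix $KG\in\mathbb C^{m\times n}$; hence by the trace case of Lieb's concavity theorem — the case $k=1$, $\phi=\trace$, already established in \cite{huang2019generalizing}, with the boundary values $p=0$ or $q=0$ obtained by a limiting argument — each such function is jointly concave in $(A,B)$ on $\mathbf H_+^m\times\mathbf H_+^n$. Since the set of admissible $G$ does not depend on $(A,B)$ and an infimum of concave functions is concave, the partial-sum map is jointly concave for every $k$, and Step 1 then upgrades this to the joint concavity of $\phi\circ\mathcal F$ for every monotone concave symmetric form $\phi$, which is \Cref{thm:GeneralLiebConcavity}.

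\textbf{Where the difficulty lies.} Given the two tools above, the deduction is short; the real work sits in the two ingredients I am treating as black boxes, supplied in \Cref{sec:Majorization} and \Cref{sec:Proofs}. The first is the majorization half of the equivalence theorem — that a monotone concave symmetric form reverses weak supermajorization — which goes through the standard decomposition of a weak-supermajorization relation into an entrywise decrease followed by a sum-preserving majorization, combined with monotonicity and Schur-concavity. The second, and the one I expect to be the crux, is the variational identity $\sum_{i=1}^k\lambda^\ua_i(f(M^*AM))=\inf_{G^2=G,\,\mathrm{rank}(G)=k}\trace[f(M^*G^*AGM)]$ for monotone increasing $f$ with $f(0)=0$: one inequality requires exhibiting an optimal rank-$k$ idempotent, and the other a min--max argument that controls the truncated matrix $M^*G^*AGM$ using only that $f$ is monotone and $f(0)=0$ (so that the rank deficiency of $GM$ does not contribute to the trace).
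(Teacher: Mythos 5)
Your proposal is correct and follows essentially the same route as the paper: reduce via the majorization equivalence theorem (\Cref{thm:GenericConcavity}) to the partial sums $\sum_{i=1}^k\lambda^\ua_i$, then apply \Cref{lem:f00} with $f(x)=x^{1/s}$, absorb the idempotent $G$ into the constant matrix as $KG$, invoke the trace case of Lieb's concavity from \cite{huang2019generalizing}, and use that an infimum of jointly concave functions is jointly concave. Your remark about handling the boundary values $p=0$ or $q=0$ is a reasonable extra precaution but does not change the argument, which otherwise matches the paper's proof step for step.
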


\begin{thm}\label{thm:GeneralLieb}
Let $\phi$ be a symmetric form that is monotone and concave on $\mathbb{R}_+^n$. Then for any $H\in \mathbf{H}^n$ and any $\{p_j\}_{j=1}^m\subset[0,1]$ such that $\sum_{j=1}^mp_j\leq1$, the function 
\begin{equation}
(A_1,A_2,\dots,A_m) \ \longmapsto\ \phi\big(\exp\big(H+\sum_{j=1}^mp_j\log A_j\big)\big)
\label{eqt:function2}
\end{equation}
is jointly concave on $(\mathbf{H}_{++}^n)^{\times m}$. In particular, $A\mapsto\phi\big(\exp(H+\log A)\big)$ is concave on $\mathbf{H}_{++}^n$.
\end{thm}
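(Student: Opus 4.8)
The plan is to realize the function \eqref{eqt:function2} as a pointwise limit of the jointly concave functions furnished by \Cref{thm:GeneralLiebConcavity}, the bridge being the Lie product formula. I begin with the two-matrix case $m=2$. Fix $s\in(0,1]$ and apply \Cref{thm:GeneralLiebConcavity} with the constant matrix $K:=\exp(sH/2)\in\mathbb{C}^{n\times n}$ and exponents $p=p_1,\ q=p_2$; since $K$ is admissible, $p_1,p_2\in[0,1]$, $p_1+p_2\le 1$ and $s\in(0,1]$, the map
\[(A_1,A_2)\ \longmapsto\ \phi\Big(\big(A_2^{p_2s/2}e^{sH/2}A_1^{p_1s}e^{sH/2}A_2^{p_2s/2}\big)^{1/s}\Big)\]
is jointly concave on $\mathbf{H}_{++}^n\times\mathbf{H}_{++}^n$, being the restriction of a concave function to a convex subset. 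Taking instead $q=0$ (so $A_2$ drops out) gives the corresponding statement for $m=1$.

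I would then let $s\to0^+$. Rewriting the bracket as the product of exponentials $e^{s\frac{p_2}{2}\log A_2}\,e^{\frac{s}{2}H}\,e^{sp_1\log A_1}\,e^{\frac{s}{2}H}\,e^{s\frac{p_2}{2}\log A_2}$ and applying the Lie product formula, for every fixed $(A_1,A_2)$ this converges, in operator norm, to $\exp\!\big(H+p_1\log A_1+p_2\log A_2\big)$. Each term of the sequence is positive definite, so its eigenvalues stay in the open cone $\mathbb{R}_{++}^n$, on which the real-valued concave symmetric form $\phi$ is automatically continuous; together with continuity of the eigenvalue map this shows $\phi$ of the bracket converges pointwise to $\phi\big(\exp(H+p_1\log A_1+p_2\log A_2)\big)$. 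A pointwise limit of jointly concave functions on a convex set is jointly concave, and this establishes \Cref{thm:GeneralLieb} for $m\le 2$, including the stated special case $A\mapsto\phi(\exp(H+\log A))$.

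For general $m$ I would run the identical scheme on the $m$-fold symmetrized product. With $M_s:=e^{sH/2}A_1^{p_1s/2}\cdots A_m^{p_ms/2}$ one has
\[M_s^*M_s \;=\; A_m^{p_ms/2}\cdots A_1^{p_1s/2}\,e^{sH}\,A_1^{p_1s/2}\cdots A_m^{p_ms/2}\;\succ\;0,\]
and $(M_s^*M_s)^{1/s}\to\exp\!\big(H+\sum_{j=1}^m p_j\log A_j\big)$ as $s\to0^+$ by the Lie product formula, so it remains to see that $(A_1,\dots,A_m)\mapsto\phi\big((M_s^*M_s)^{1/s}\big)$ is jointly concave for each fixed $s$. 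This is the $m$-matrix analogue of \Cref{thm:GeneralLiebConcavity}, established by the same argument: the equivalence of the Introduction reducing concavity of $\phi(\mathcal F(\cdot))$ to that of each $\sum_{i=1}^k\lambda^\ua_i(\mathcal F(\cdot))$; the variational identity
\[\sum_{i=1}^k\lambda^\ua_i\big(f(M^*AM)\big)=\inf_{G^2=G,\,\mathrm{rank}(G)=k}\trace\big[f(M^*G^*AGM)\big]\]
applied with $A=I_n$, $f(t)=t^{1/s}$ and $M=M_s$; and finally the joint concavity, for each fixed idempotent $G$, of $(A_1,\dots,A_m)\mapsto\trace\big[(A_m^{p_ms/2}\cdots A_1^{p_1s/2}\,N\,A_1^{p_1s/2}\cdots A_m^{p_ms/2})^{1/s}\big]$ with $N=e^{sH/2}G^*Ge^{sH/2}\succeq 0$ fixed, which is the iterated form of the Lieb--type trace inequality underlying \Cref{thm:GeneralLiebConcavity}.

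I expect the one genuinely delicate point to be the passage to the limit rather than any individual concavity statement: one must justify that $\phi$ applied to the $s$-dependent products converges to $\phi$ of the exponential, which rests on (i) the Lie product formula and (ii) continuity of $\phi$ at positive-definite arguments --- the latter costing nothing, since a finite concave function is continuous on the open convex set $\mathbb{R}_{++}^n$. A secondary subtlety is that in the multivariate case one needs \emph{joint} concavity in all $m$ variables at once; this is exactly why the argument is routed through the symmetrized product $M_s^*M_s$, whose $\phi$-value is handled by the same machinery as \Cref{thm:GeneralLiebConcavity}, rather than by freezing all but two of the $A_j$'s, which would give only separate concavity.
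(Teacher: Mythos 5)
Your treatment of $m\le 2$ is correct and takes a genuinely different route from the paper: you specialize \Cref{thm:GeneralLiebConcavity} to the fixed matrix $K=e^{sH/2}$, invoke the symmetric Lie--Trotter formula as $s\to0^+$, and use that a pointwise limit of jointly concave functions is concave together with continuity of $\phi$ on $\mathbb{R}_{++}^n$ (automatic for a finite concave function on an open convex set). The paper never takes a limit: it reduces, via \Cref{thm:GenericConcavity}, to the partial sums $\sum_{i=1}^k\lambda^\ua_i$, expresses these through \Cref{lem:fNI0} with $f=\exp$ as infima of $\trace\bigl[\exp\bigl(M+H+\sum_jp_j\log A_j\bigr)\bigr]$ over rank-$(n-k)$ Hermitian $M$, and quotes the known joint concavity of the multivariate trace exponential (Lieb's Corollary 6.1), which settles all $m$ simultaneously.

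The gap is your general-$m$ step. Your scheme needs joint concavity, for each fixed idempotent $G$ (hence each fixed $N=e^{sH/2}G^*Ge^{sH/2}\succeq0$), of
\begin{equation*}
(A_1,\dots,A_m)\ \longmapsto\ \trace\bigl[\bigl(A_m^{p_ms/2}\cdots A_1^{p_1s/2}\,N\,A_1^{p_1s/2}\cdots A_m^{p_ms/2}\bigr)^{1/s}\bigr],
\end{equation*}
and you assert this as ``the iterated form'' of \Cref{thm:GeneralLiebConcavity}. It is not: in \Cref{thm:GeneralLiebConcavity} the fixed matrix sits \emph{between} the two variable powers, whereas here it is innermost with two or more variables nested around it, and no iteration is available since $A\mapsto A^{c}NA^{c}$ is neither operator monotone nor operator concave, so composition arguments break down. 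Moreover, as stated (``for each fixed $s$'') the claim is false: take $s=1$, $m=2$, $p_1=p_2=\tfrac12$, $H=0$, $G$ the rank-one projection onto $e_1$ (so $N=\mathrm{diag}(1,0)$), and $A_2=\mathrm{diag}(\epsilon^2,1)$; along the affine line $A_1(t)=\bigl(\begin{smallmatrix}1&t\\ t&1\end{smallmatrix}\bigr)$ the functional equals $\trace\bigl[NA_1(t)^{1/4}A_2^{1/2}A_1(t)^{1/4}\bigr]=\epsilon\alpha(t)^2+\beta(t)^2$ with $\alpha,\beta$ the entries of $A_1(t)^{1/4}$, and expanding gives $\epsilon+\bigl(\tfrac1{16}-\tfrac3{16}\epsilon\bigr)t^2+O(t^4)$, strictly convex near $t=0$ for $\epsilon<\tfrac13$; so it is not even separately concave in $A_1$. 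What you actually need is the statement for small $s$, and already for $m=3$ with $N=I_n$ this is the delicate multi-variable sandwiched concavity question that neither the paper nor its references establish, so it cannot be waved through. The natural repair is to drop the finite-$s$ nested product for $m\ge2$ and argue as the paper does, via \Cref{lem:fNI0} and the known joint concavity of $(A_1,\dots,A_m)\mapsto\trace\bigl[\exp\bigl(L+\sum_jp_j\log A_j\bigr)\bigr]$ (or, alternatively, keep your Lie--Trotter limit only for the two-variable case, where \Cref{thm:GeneralLiebConcavity} genuinely applies).
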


\Cref{thm:GeneralLiebConcavity} is a further generalization of the generalized Lieb's concavity theorem (Theorem 3.2) in \cite{huang2019generalizing} (see also Theorem 2.5 in \cite{huang2019improvement}), and \Cref{thm:GeneralLieb} is a further generalization of Theorem 3.3 in \cite{huang2019generalizing} (see also Corollary 6.1 in \cite{LIEB1973267} or Theorem 2.6 in \cite{huang2019improvement}). We will first show that it is sufficient to prove the concavity of $\eqref{eqt:function1}$ and \eqref{eqt:function2} with $\phi$ being the sum of the $k$ smallest eigenvalues for all $1\leq k\leq n$. This proof strategy is inspired by a recent work of Hiai et al. \cite{hiai2017generalized}, in which they used majorization theories to generalize some multivariate trace inequalities. They showed that, to prove a class of integral inequality for arbitrary unitarily invariant norm of Hermitian matrices requires only to prove it for every Ky Fan $k$-norm, namely the sum of the $k$ largest singular values. Following their idea, we will also use techniques of majorization to first obtain an equivalence theorem as follows. 

\begin{thm} \label{thm:GenericConcavity}
Let $\Omega$ be a convex set in some linear space, and $\mathcal{F}:\Omega \rightarrow \mathbf{H}^n$ be a function that maps $\Omega$ to $n\times n$ Hermitian matrices. Then the following two statements are equivalent:
\begin{itemize}
\item[(i)] For any monotone, convex symmetric form $\phi$ on $\mathbb{R}^n$, the map $X\mapsto \phi\big(\mathcal{F}(X)\big)$ is convex on $\Omega$.
\item[(ii)] For any $1\leq k\leq n$, the map $X\mapsto \sum_{i=1}^k\lambda^\da_i\big(\mathcal{F}(X)\big)$ is convex on $\Omega$.
\end{itemize}
Similarly, the following two statements are equivalent:
\begin{itemize}
\item[(i*)] For any monotone, concave symmetric form $\phi$ on $\mathbb{R}^n$, the map $X\mapsto \phi\big(\mathcal{F}(X)\big)$ is concave on $\Omega$.
\item[(ii*)] For any $1\leq k\leq n$, the map $X\mapsto \sum_{i=1}^k\lambda^\ua_i\big(\mathcal{F}(X)\big)$ is concave on $\Omega$.
\end{itemize}
\end{thm}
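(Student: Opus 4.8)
The plan is to prove the two equivalences by reduction to the theory of majorization, using the fact (to be recalled in Section 3) that for $A,B\in\mathbf{H}^n$ one has $\bm\lambda(A)\prec_w\bm\lambda(B)$ (weak submajorization) if and only if $\sum_{i=1}^k\lambda_i^\da(A)\le\sum_{i=1}^k\lambda_i^\da(B)$ for all $1\le k\le n$, together with the companion characterization of monotone convex symmetric forms: $\phi$ is monotone and convex on $\mathbb R^n$ if and only if $\bm x\prec_w\bm y\Rightarrow\phi(\bm x)\le\phi(\bm y)$. The forward implications $(i)\Rightarrow(ii)$ and $(i^*)\Rightarrow(ii^*)$ are immediate: $\bm x\mapsto\sum_{i=1}^k x_i^\da$ is itself a monotone convex symmetric form (it is the function from item 4 of the examples list, with $\bm a=\mathbbm 1_{\{i\le k\}}$, applied to $\bm x^\da$ rather than $\bm x^\ua$), and $\bm x\mapsto\sum_{i=1}^k x_i^\ua$ is a monotone concave symmetric form, so each is covered by the hypothesis.

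For the substantive direction $(ii)\Rightarrow(i)$, fix a monotone convex symmetric form $\phi$ and points $X_0,X_1\in\Omega$, $\tau\in[0,1]$, and write $X_\tau=\tau X_1+(1-\tau)X_0$. I would set $A=\mathcal F(X_\tau)$ and $B=\tau\mathcal F(X_1)+(1-\tau)\mathcal F(X_0)$. Hypothesis $(ii)$ says exactly $\sum_{i=1}^k\lambda_i^\da(A)\le\sum_{i=1}^k\lambda_i^\da(B)$ for every $k$, i.e.\ $\bm\lambda(\mathcal F(X_\tau))\prec_w\bm\lambda\big(\tau\mathcal F(X_1)+(1-\tau)\mathcal F(X_0)\big)$. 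Applying the majorization characterization of monotone convex symmetric forms gives $\phi(\mathcal F(X_\tau))\le\phi\big(\tau\mathcal F(X_1)+(1-\tau)\mathcal F(X_0)\big)$, and then the convexity of the extension of $\phi$ to $\mathbf H^n$ (Proposition 2.4) bounds the right side by $\tau\phi(\mathcal F(X_1))+(1-\tau)\phi(\mathcal F(X_0))$. Chaining the two inequalities yields convexity of $X\mapsto\phi(\mathcal F(X))$. The concave case $(ii^*)\Rightarrow(i^*)$ is obtained by applying this to $-\phi$, or equivalently by running the same argument with ascending eigenvalue sums and reversed inequalities; note that $\sum_{i=1}^k\lambda_i^\ua(C)=-\sum_{i=1}^k\lambda_i^\da(-C)$ translates the hypothesis on $\mathcal F$ into hypothesis $(ii)$ for $-\mathcal F$, so no separate work is needed.

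The main obstacle is not any single step of the above chain but making sure the two majorization facts are stated and proved in the exact generality used here: weak submajorization (not majorization, since the traces need not match) and symmetric forms on all of $\mathbb R^n$ that are only assumed monotone and convex, not homogeneous or gauge-type. The nontrivial ingredient is the implication $\bm x\prec_w\bm y\Rightarrow\phi(\bm x)\le\phi(\bm y)$ for such $\phi$; the standard route is to first reduce weak majorization to ordinary majorization by padding with a large coordinate or by the observation that $\bm x\prec_w\bm y$ implies $\bm x\le\bm z$ entrywise (after sorting) for some $\bm z\prec\bm y$, then invoke that $\bm z\prec\bm y$ means $\bm z$ lies in the convex hull of permutations of $\bm y$ (Birkhoff/Hardy–Littlewood–Pólya), and finally combine permutation invariance, convexity, and monotonicity of $\phi$. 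This same lemma is precisely what is needed to finish the deferred proof of Proposition 2.4, so I would prove it once in Section 3 and use it in both places. Everything else is bookkeeping.
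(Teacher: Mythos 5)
The easy parts of your plan are fine: the forward implications, the reduction of the concave case to the convex case via $-\mathcal F$ and $-\phi(-\cdot)$, and the identification of the key ingredient (monotone convex symmetric forms respect weak majorization, i.e.\ Lemma 3.3, proved through Lemmas 3.1--3.2) all match the paper. But the substantive direction contains a genuine error. You claim that hypothesis (ii) ``says exactly'' $\sum_{i=1}^k\lambda^\da_i\big(\mathcal F(X_\tau)\big)\le\sum_{i=1}^k\lambda^\da_i(B)$ with $B=\tau\mathcal F(X_1)+(1-\tau)\mathcal F(X_0)$. It does not: (ii) gives $\sum_{i=1}^k\lambda^\da_i\big(\mathcal F(X_\tau)\big)\le\tau\sum_{i=1}^k\lambda^\da_i\big(\mathcal F(X_1)\big)+(1-\tau)\sum_{i=1}^k\lambda^\da_i\big(\mathcal F(X_0)\big)$, and by Ky Fan's subadditivity the right-hand side is an \emph{upper} bound for $\sum_{i=1}^k\lambda^\da_i(B)$, so no comparison between $\mathcal F(X_\tau)$ and $B$ follows. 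The claimed weak majorization can genuinely fail: take $n=2$, $\Omega=[0,1]$, and $\mathcal F(t)$ the orthogonal projection onto the span of $(\cos(\pi t/2),\sin(\pi t/2))$. Then $\bm\lambda(\mathcal F(t))\equiv(1,0)$, so (ii) holds trivially, but with $X_0=0$, $X_1=1$, $\tau=\tfrac12$ one has $B=\tfrac12 I_2$, $\bm\lambda(B)=(\tfrac12,\tfrac12)$, while $\bm\lambda(\mathcal F(\tfrac12))=(1,0)$, and $1\not\le\tfrac12$. With $\phi(\bm x)=x^\da_1$ even your intermediate inequality $\phi(\mathcal F(X_\tau))\le\phi(B)$ is false there (the final convexity inequality of course still holds). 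So the detour through the matrix convex combination and the matrix-extension convexity of Proposition 2.3 is not a repairable bookkeeping issue; it is the wrong comparison object.

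The fix, which is exactly what the paper does (in the concave formulation), is to compare with the \emph{vector} $\bm a=\tau\bm\lambda^\da\big(\mathcal F(X_1)\big)+(1-\tau)\bm\lambda^\da\big(\mathcal F(X_0)\big)$: since the sum of two descending-sorted vectors is descending-sorted, hypothesis (ii) says precisely $\bm\lambda\big(\mathcal F(X_\tau)\big)\prec_w\bm a$; Lemmas 3.1--3.3 then give $\phi\big(\bm\lambda(\mathcal F(X_\tau))\big)\le\phi(\bm a)$, and convexity of $\phi$ \emph{as a function on $\mathbb R^n$} gives $\phi(\bm a)\le\tau\phi\big(\mathcal F(X_1)\big)+(1-\tau)\phi\big(\mathcal F(X_0)\big)$; no appeal to the convexity of the matrix extension is needed in this theorem. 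One further caution: your asserted equivalence ``$\phi$ monotone and convex iff $\bm x\prec_w\bm y\Rightarrow\phi(\bm x)\le\phi(\bm y)$'' is false in the ``if'' direction (e.g.\ $\phi(\bm x)=g\big(\sum_i x_i\big)$ with $g$ increasing but not convex preserves $\prec_w$); only the ``only if'' direction is true and needed, so state it as a one-way implication.
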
 

We remark that, the convex part and the concave part of \Cref{thm:GenericConcavity} are equivalent. In fact, if the convex part is true, we can immediately prove the concave part by considering $\mathcal{F}(\cdot)\rightarrow -\mathcal{F}(\cdot)$, $\phi(\cdot)\rightarrow -\phi(-(\cdot))$ and noticing that $-\lambda^\da_i(\mathcal{F}(\cdot)) = \lambda^\ua_i(-\mathcal{F}(\cdot))$. The proof is diverted to the end of \Cref{sec:Majorization}, after our brief review on some fundamental theories of majorization. 

Supported by \Cref{thm:GenericConcavity}, we can confidently reduce our task to proving the concavity of \eqref{eqt:function1} and \eqref{eqt:function2} only for $\phi(\bx)=\sum_{i=1}^kx^\ua_i,1\leq k\leq n$. This will be done by interpreting the sum of the k smallest eigenvalues as the infimum of some specialized trace functions, using the following two lemmas. 

\begin{lemma}\label{lem:f00}
Let $f:\mathbb{R}_+\rightarrow \mathbb{R}$ be a monotone increasing function such that $f(0)=0$. Then for any $A\in\mathbf{H}_+^n,M\in \mathbb{C}^{n\times n}$ and any $k\leq n$,
\begin{equation}\label{eqt:f00}
\sum_{i=1}^k \lambda^\ua_i\big(f(M^*AM)\big) 
= \inf_{\begin{subarray}{c}G\in\mathbb{C}^{n\times n},G^2=G\\\mathrm{rank}(G)=k\end{subarray}} \trace\big[f(M^*G^*AGM)\big].
\end{equation}
Moreover, if $M$ is invertible, the infimum can be achieved.
\end{lemma}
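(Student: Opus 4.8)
The plan is to establish the identity by proving two inequalities, treating the "$\geq$" direction (construct a near-optimal idempotent $G$) and the "$\leq$" direction (bound every admissible $G$ from below) separately.

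First I would reduce to a clean linear-algebra picture. Since $f$ is monotone increasing with $f(0)=0$, applying $f$ to a positive semidefinite matrix preserves positive semidefiniteness and order, and $\operatorname{rank}(f(X))\leq \operatorname{rank}(X)$; moreover the nonzero eigenvalues of $M^*AM$ and of $AMM^*$ coincide. For a rank-$k$ idempotent $G$, write $G = \sum_{j=1}^k v_j w_j^*$ with $\{v_j\}$ the range basis and $\{w_j\}$ biorthogonal to it; the key structural fact is that $M^*G^*AGM$ has rank at most $k$, so $f(M^*G^*AGM)$ has at most $k$ nonzero eigenvalues and $\trace[f(M^*G^*AGM)] = \sum_{i=1}^k \lambda_i^\da\big(f(M^*G^*AGM)\big) = \sum_{i=1}^k f\big(\lambda_i^\da(M^*G^*AGM)\big)$, using monotonicity of $f$ in the last step. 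So the right-hand side is $\inf_G \sum_{i=1}^k f(\lambda_i^\da(M^*G^*AGM))$, and the left-hand side equals $\sum_{i=1}^k f(\lambda_i^\ua(M^*AM))$, which — because the smallest $n-k$ eigenvalues of the rank-$\le?$ matrix need care — I would rewrite: if $\operatorname{rank}(M^*AM) \le n-k$ then both sides are $0$; otherwise $\lambda_i^\ua(f(M^*AM)) = f(\lambda_{n-k+i}^\da(M^*AM))$ and the identity becomes $\sum_{i=1}^k f(\lambda_{n-k+i}^\da(M^*AM)) = \inf_G \sum_{i=1}^k f(\lambda_i^\da(M^*G^*AGM))$.

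For the "$\leq$" direction (RHS dominates LHS), I would show $\sum_{i=1}^k \lambda_i^\da(M^*G^*AGM) \ge \sum_{i=1}^k \lambda_{n-k+i}^\da(M^*AM)$ is \emph{not} what I want — rather I want the reverse-type bound on the top-$k$ sum of the compressed matrix; instead the correct route is: for \emph{any} rank-$k$ idempotent $G$, the range $\mathcal{V} = \operatorname{range}(GM)$ has dimension $\le k$, and on an appropriate $k$-dimensional subspace the eigenvalues of the compression are at least as small as the $k$ smallest of $M^*AM$ in the Ky Fan / Courant–Fischer sense; more precisely I would pick, in the "moreover" invertible case, $G$ to be the (oblique) projection whose range is $M^{-1}$ applied to the span of the eigenvectors of $AMM^*\!$— equivalently choose $G$ so that $GM$ has columns spanning the eigenspace of $M^*AM$ for its $k$ smallest eigenvalues, giving $M^*G^*AGM$ with spectrum exactly $\{\lambda_{n-k+1}^\da,\dots,\lambda_n^\da\}\cup\{0,\dots\}$ and hence equality. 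For the general (possibly singular $M$) case I would take a limiting argument, replacing $M$ by $M+\varepsilon I$ and letting $\varepsilon\to 0$, using continuity of eigenvalues and of $f$ on compact spectral ranges. For the "$\geq$" direction (LHS dominates RHS), I would invoke the Courant–Fischer min-max characterization: for every $k$-dimensional subspace $\mathcal{W}$ there is a unit vector bound showing $\trace[f(M^*G^*AGM)] \ge \sum_{i=1}^k f(\lambda_i^\ua(M^*AM))$, because compressing to a $k$-dimensional range can only push eigenvalues up relative to the $k$ smallest; this uses that $f$ is monotone so the inequality at the level of eigenvalues transfers to $f$ of them, and Ky Fan's extremal principle for the sum of the $k$ smallest eigenvalues of $M^*AM$ written as a minimum over $k$-dimensional subspaces of a trace of a compression.

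The main obstacle I anticipate is the bookkeeping when $M$ is singular or when $\operatorname{rank}(M^*AM) < k$: the idempotents $G$ of rank exactly $k$ may be unable to "see" enough of $A$, the infimum may fail to be attained, and the eigenvalue-matching identities $\lambda_i^\ua(f(X)) = f(\lambda_i^\ua(X))$ need $f\ge 0$ and care about the zero eigenvalue's multiplicity. I would handle this by first proving the clean invertible case with the explicit optimal $G$ (getting both inequalities and attainment simultaneously), and then obtaining the general statement by the perturbation $M\mapsto M+\varepsilon I$ together with upper semicontinuity arguments for the infimum — noting the "moreover" clause is exactly the assertion that no perturbation is needed when $M$ is invertible.
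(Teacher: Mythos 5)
There is a genuine gap in your ``$\geq$'' direction (every admissible $G$ gives trace at least the left-hand side), which is the heart of the lemma. As you describe it, the bound rests only on the fact that $GM$ has rank at most $k$, via ``compressing to a $k$-dimensional range can only push eigenvalues up relative to the $k$ smallest.'' That principle is Courant--Fischer/Cauchy interlacing for \emph{orthonormal} compressions $Q^*XQ$ with $Q^*Q=I_k$; it is false for a general rank-$k$ map, and your argument never invokes the idempotency $G^2=G$ beyond its rank. Indeed, if only the rank mattered, the same reasoning would apply to $G=\epsilon\,QQ^*$ with $\epsilon$ small, for which $\trace\big[f(M^*G^*AGM)\big]$ can be driven toward $n\,f(0)=0$, strictly below $\sum_{i=1}^k\lambda^\ua_i\big(f(M^*AM)\big)$ in general. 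The missing ingredient is exactly what the paper supplies: for invertible $M$, set $P=M^{-1}GM$, which is again idempotent of rank $k$, use the fact that the nonzero singular values of an idempotent are at least $1$ (\Cref{lem:Idempotent}, equivalently $PP^*\succeq UU^*$ for $U$ an orthonormal basis of $\mathrm{range}(P)$), and the identity $\trace[f(X^*X)]=\trace[f(XX^*)]$ (valid because $f(0)=0$) to dominate the oblique compression by the orthonormal compression $U^*M^*AMU$, to which \Cref{lem:PartialCompare} then applies. Without some equivalent of this ``idempotents do not contract'' step, the inequality simply does not follow from min--max considerations.

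Your treatment of singular $M$ also does not close as stated, though this is more repairable. Approximating by $M+\varepsilon I$ loses the property the paper arranges for its approximants, namely $M_jM_j^*\succeq MM^*$ (obtained by raising singular values of $M$), which is what lets one transfer the near-optimal $G_j$ for $M_j$ back to $M$ via monotonicity of $\trace[f(\cdot)]$: note $(M+\varepsilon I)(M+\varepsilon I)^*\not\succeq MM^*$ in general. Moreover, ``upper semicontinuity of the infimum'' points the wrong way: an infimum of functions continuous in $M$ is upper semicontinuous, which yields $\limsup_\varepsilon \inf_G\trace[f(M_\varepsilon^*G^*AGM_\varepsilon)]\leq \inf_G\trace[f(M^*G^*AGM)]$, i.e.\ (combined with the invertible case) the \emph{lower} bound for general $M$ --- not the needed upper bound $\inf_G\trace[f(M^*G^*AGM)]\leq\sum_{i=1}^k\lambda^\ua_i\big(f(M^*AM)\big)$. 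The difficulty is that the optimal $G_\varepsilon=M_\varepsilon QQ^*M_\varepsilon^{-1}$ may blow up as $\varepsilon\to0$, so plain continuity does not control $\trace[f(M^*G_\varepsilon^*AG_\varepsilon M)]$; the paper's $M_jM_j^*\succeq MM^*$ trick (or an equivalent device) is needed. Your explicit optimal $G=MQQ^*M^{-1}$ in the invertible case does match the paper's attainment construction, and your rank/$f(0)=0$ bookkeeping is fine; the two points above are where the proof as proposed would fail.
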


\begin{lemma}\label{lem:fNI0}
Let $f:\mathbb{R}\rightarrow \mathbb{R}$ be a monotone increasing function such that $f(x\rightarrow-\infty)=0$. Then for any $A\in\mathbf{H}^n$ and any $k\leq n$,
\begin{equation}\label{eqt:fNI0}
\sum_{i=1}^k \lambda^\ua_i\big(f(A)\big) 
= \inf_{\begin{subarray}{c}H\in\mathbf{H}^n\\\mathrm{rank}(H)=n-k\end{subarray}} \trace\big[f(H+A)\big].
\end{equation}
\end{lemma}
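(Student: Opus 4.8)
The plan is to reduce \eqref{eqt:fNI0} to \eqref{eqt:f00} by a suitable change of variables, and then to handle the remaining issue that the ambient additive perturbation $H$ is less restrictive than the conjugation $M^*(\cdot)M$ appearing in \Cref{lem:f00}. First I would diagonalize $A = U^*\diag(\bm\lambda)U$ and reduce to the case $A$ diagonal by unitary invariance of both sides (the eigenvalues of $f(H+A)$ and of $f(A)$ are unchanged under $A\mapsto U^*AU$, $H\mapsto U^*HU$, and the constraint $\mathrm{rank}(H)=n-k$ is preserved). For the inequality ``$\leq$'': given any $H\in\mathbf{H}^n$ with $\mathrm{rank}(H)=n-k$, write $\mathcal P$ for the orthogonal projection onto $\ker H$, which has rank $k$; then by the min-max/variational characterization of the sum of the $k$ smallest eigenvalues (Ky Fan), together with monotonicity of $f$ (so $\lambda^\ua_i(f(\cdot))=f(\lambda^\ua_i(\cdot))$ and $\sum_{i\le k}f(\lambda^\ua_i(X))=\min_{\mathrm{rank}(\mathcal Q)=k}\trace[\mathcal Q f(X)\mathcal Q]$ is not quite what we want — instead use $\sum_{i\le k}\lambda^\ua_i(Y)\le \trace[\mathcal P Y\mathcal P]$ for any rank-$k$ projection $\mathcal P$ and any $Y$), one gets $\sum_{i=1}^k\lambda^\ua_i(f(A)) = \sum_{i=1}^k\lambda^\ua_i(f(A)\mathcal P) \le \trace[\mathcal P f(H+A)\mathcal P]$ once we argue $\mathcal P f(A)\mathcal P = \mathcal P f(H+A)\mathcal P$ is false in general, so this direct route needs care.

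A cleaner route, and the one I would actually pursue, is to derive \eqref{eqt:fNI0} directly from \eqref{eqt:f00} via a limiting argument. Fix a large parameter $t>0$ and consider $g_t(x) := f(x)$ on $\mathbb{R}$; the idea is to realize ``$H+A$'' as ``$M_t^*A_tM_t$'' for cleverly chosen $M_t, A_t$. Concretely, given $H$ with $\mathrm{rank}(H)=n-k$, diagonalize $H = V^*\diag(h_1,\dots,h_{n-k},0,\dots,0)V$; the $n-k$ directions where $H$ acts are exactly where we want to push eigenvalues of $H+A$ to $-\infty$ (so that $f$ of them vanishes) while the remaining $k$-dimensional subspace is where $H=0$. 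Since $f(x\to-\infty)=0$, for the ``$\le$'' direction I would take $H$ of the form $-N\mathcal R$ with $\mathcal R$ a fixed rank-$(n-k)$ projection and let $N\to\infty$: then $f(A - N\mathcal R)$, restricted spectrally, converges so that $n-k$ of its eigenvalues tend to $f(-\infty)=0$ and the other $k$ converge to the eigenvalues of $f(\mathcal P A\mathcal P|_{\mathrm{ran}\mathcal P})$ where $\mathcal P = I-\mathcal R$ (this is a standard block-diagonalization-in-the-limit / Schur-complement fact, using continuity of $f$ away from $-\infty$ and $f\ge 0$ in the tail from monotonicity). Optimizing over the rank-$k$ projection $\mathcal P$ and invoking \Cref{lem:f00} with $M=I$ (or directly the variational form $\sum_{i\le k}\lambda^\ua_i(f(A)) = \inf_{\mathrm{rank}(G)=k, G^2=G}\trace[f(G^*AG)]$, noting $G^*AG$ for a projection $G$ is the relevant compression) gives the upper bound $\sum_{i=1}^k\lambda^\ua_i(f(A))$. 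For the reverse inequality ``$\ge$'': for any $H$ with $\mathrm{rank}(H)=n-k$, the kernel of $H$ is a subspace $\mathcal V$ of dimension $\ge k$; pick a $k$-dimensional subspace $\mathcal W\subseteq\mathcal V$ with projection $\mathcal P_\mathcal W$ and use the general fact $\trace[f(H+A)] \ge \sum_{i=1}^k\lambda^\ua_i(f(H+A)) \ge \sum_{i=1}^k\lambda^\ua_i(\mathcal P_{\mathcal W}f(H+A)\mathcal P_{\mathcal W})$; then I must relate $\mathcal P_{\mathcal W}(H+A)\mathcal P_{\mathcal W} = \mathcal P_{\mathcal W}A\mathcal P_{\mathcal W}$ (since $H\mathcal P_\mathcal W=0$) and use operator monotonicity-type interlacing plus $\sum_{i\le k}\lambda^\ua_i(f(A)) \le \sum_{i\le k}f(\lambda^\ua_i(\mathcal P_\mathcal W A\mathcal P_\mathcal W))$ — but this last step has the wrong direction unless handled through the variational form of \Cref{lem:f00} again. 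So the honest reduction is: ``$\ge$'' follows because any admissible $H$ produces, through $\mathcal W\subseteq\ker H$, an admissible rank-$k$ idempotent $G=\mathcal P_{\mathcal W}$ for \eqref{eqt:f00} with $M=I$, giving $\trace[f(H+A)]\ge\trace[f(\mathcal P_{\mathcal W}(H+A)\mathcal P_{\mathcal W})] = \trace[f(\mathcal P_{\mathcal W}A\mathcal P_{\mathcal W})]\ge \inf_G\trace[f(G^*AG)] = \sum_{i=1}^k\lambda^\ua_i(f(A))$, where the first inequality uses that adjoining the complementary block (on which $f$ evaluates to $f$ of eigenvalues $\ge$ the ones we dropped — here $f(x\to-\infty)=0$ is what makes $\trace[f(\mathcal P_{\mathcal W}(H+A)\mathcal P_{\mathcal W})]$ a lower bound, since the discarded eigenvalues of $H+A$ in directions of $\ker H^\perp$ contribute $f$-values that are $\ge f(-\infty)=0$; more precisely $f\ge 0$ is not assumed, only $f(-\infty)=0$, so one uses that the discarded block's $f$-trace plus the kept block's $f$-trace equals the total only up to the usual Schur majorization, and $f$ monotone gives $\trace[f(H+A)]\ge \sum_i f(\lambda^\ua_i(H+A))$, then Cauchy interlacing on the kept $k$-block).

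The main obstacle, and the step I expect to require the most care, is the passage from the compression identity $\mathcal P_{\mathcal W}(H+A)\mathcal P_{\mathcal W}=\mathcal P_{\mathcal W}A\mathcal P_{\mathcal W}$ to a clean comparison of $f$-traces, because $f$ is applied \emph{before} compression on one side and \emph{after} compression on the other; these do not commute, and one must route everything through the variational identity \eqref{eqt:f00} rather than manipulating $f(H+A)$ directly. A secondary technical point is that \Cref{lem:f00} as stated requires $f(0)=0$, whereas here only $f(-\infty)=0$ is assumed; I would bridge this by noting that for the lower bound only monotonicity of $f$ and the interlacing/majorization inequalities are used (not $f(0)=0$), and for the upper bound the limit $N\to\infty$ sends the complementary eigenvalues past every finite threshold, so the value $f(0)$ never enters. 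I would also remark that, unlike \Cref{lem:f00}, the infimum in \eqref{eqt:fNI0} is generally \emph{not} attained (it is approached only as the perturbation becomes unbounded), which is consistent with the statement as written.
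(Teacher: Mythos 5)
Your lower-bound ("$\ge$") argument contains a step that is genuinely false, and it is exactly the point this lemma is designed to get around. You compress with an $n\times n$ orthogonal projection and claim $\trace\big[f(H+A)\big]\ge\trace\big[f(\mathcal{P}_{\mathcal{W}}(H+A)\mathcal{P}_{\mathcal{W}})\big]$, then identify $\inf_G\trace[f(G^*AG)]$ with $\sum_{i\le k}\lambda^\ua_i(f(A))$ via \Cref{lem:f00} with $M=I$. But $\mathcal{P}_{\mathcal{W}}X\mathcal{P}_{\mathcal{W}}$ is an $n\times n$ matrix whose spectrum contains $0$ with multiplicity $n-k$, so $\trace\big[f(\mathcal{P}_{\mathcal{W}}X\mathcal{P}_{\mathcal{W}})\big]=\trace[f(U^*XU)]+(n-k)f(0)$ for an isometry $U$ spanning $\mathcal{W}$; when $f(0)>0$ (the relevant case here is $f=\exp$, $f(0)=1$) the claimed inequality fails, e.g.\ $A=-TI_n$ with $T$ large and $H\succeq 0$ of rank $n-k$ gives $\trace[f(H+A)]\approx 0$ while $\trace\big[f(\mathcal{P}_{\mathcal{W}}(H+A)\mathcal{P}_{\mathcal{W}})\big]\approx(n-k)f(0)$. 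For the same reason \Cref{lem:f00} is out of scope: it requires $f(0)=0$ and $A\in\mathbf{H}^n_+$, neither of which holds here, and with $f(0)\neq 0$ the infimum over projections of the $n\times n$ compressed trace is $\sum_{i\le k}\lambda^\ua_i(f(A))+(n-k)f(0)$, not $\sum_{i\le k}\lambda^\ua_i(f(A))$. Your hedge that "only monotonicity is used, not $f(0)=0$" does not rescue this route. Also, your remark that $f\ge 0$ is not available is wrong: monotonicity together with $f(x\rightarrow-\infty)=0$ forces $f(x)\ge 0$ for all $x$, and this positivity is precisely what the correct argument needs. The paper's chain avoids every spurious $f(0)$ by working with rectangular isometries and $k\times k$ traces: pick $U\in\mathbb{C}^{n\times k}$, $U^*U=I_k$, $HU=0$, and use \Cref{lem:PartialCompare} twice together with $f\ge0$ to get $\trace[f(H+A)]\ge\sum_{i\le k}\lambda^\da_i\big(f(H+A)\big)\ge\trace[f(U^*(H+A)U)]=\trace[f(U^*AU)]\ge\sum_{i\le k}\lambda^\ua_i\big(f(A)\big)$.

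Your upper-bound ("$\le$") idea is the right one in spirit (send $n-k$ directions to $-\infty$ so their $f$-values vanish), but as stated it has a secondary gap: with a generic rank-$(n-k)$ projection $\mathcal{R}$ and $N\to\infty$, the non-divergent eigenvalues of $A-N\mathcal{R}$ only converge to those of the compression, so you would need continuity of $f$ at those limit points, which is not assumed ($f$ is merely monotone and may jump). The paper sidesteps both the continuity issue and the optimization over projections by choosing $H_\delta$ in the eigenbasis of $A$, with $H_\delta+A$ having exactly the eigenvalues $\lambda^\ua_1(A),\dots,\lambda^\ua_k(A)$ and $-\delta$ (multiplicity $n-k$), so that $\trace[f(H_\delta+A)]=\sum_{i\le k}\lambda^\ua_i\big(f(A)\big)+(n-k)f(-\delta)$ and only $f(-\delta)\to 0$ is invoked; no variational lemma is needed on this side at all.
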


The proofs of \Cref{lem:f00} and \Cref{lem:fNI0} will be presented in \Cref{sec:Proofs}, followed by the proofs of \Cref{thm:GeneralLiebConcavity} and \Cref{thm:GeneralLieb}.

We will be using frequently the following extended version of the Courant-Fisher characterization (min-max theorem) for eigenvalues of Hermitian matrices. One may refer to \cite{horn2012matrix,Parlett:1998:SEP:280490} for a proof.

\begin{thm}[Courant-Fisher] \label{thm:Courant-Fisher}
For any $A\in \mathbf{H}^n$ and any $0\leq m_1< m_2\leq n$, 
\begin{align}
\sum_{i=m_1+1}^{m_2} \lambda^\da_i(A) =&\  \max_{\begin{subarray}{c} U\in \mathbb{C}^{n\times m_2} \\ U^*U = I_{m_2}\end{subarray}} \min_{\begin{subarray}{c} V\in \mathbb{C}^{m_2\times (m_2-m_1)} \\ V^*V = I_{m_2-m_1}\end{subarray}} \trace[V^*U^*AUV] \\
=&\ \min_{\begin{subarray}{c} U\in \mathbb{C}^{n\times (n-m_1)} \\ U^*U = I_{n-m_1}\end{subarray}} \max_{\begin{subarray}{c} V\in \mathbb{C}^{(n-m_1)\times (m_2-m_1)} \\ V^*V = I_{m_2-m_1}\end{subarray}} \trace[V^*U^*AUV].\nonumber
\end{align}
\end{thm}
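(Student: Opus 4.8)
The plan is to reduce both displayed equalities to two classical facts and then compose them. The first fact is the single-index Ky Fan extremal principle: for every $B\in\mathbf{H}^r$ and every $k\le r$,
\[\sum_{i=1}^k\lambda^\da_i(B)=\max_{\begin{subarray}{c}W\in\mathbb{C}^{r\times k}\\ W^*W=I_k\end{subarray}}\trace[W^*BW],\qquad \sum_{i=1}^k\lambda^\ua_i(B)=\min_{\begin{subarray}{c}W\in\mathbb{C}^{r\times k}\\ W^*W=I_k\end{subarray}}\trace[W^*BW].\]
Both follow from the spectral theorem: writing $B=\sum_l\lambda_l(B)u_lu_l^*$ one has $\trace[W^*BW]=\sum_l\lambda_l(B)\,\|W^*u_l\|^2$, where the weights $\|W^*u_l\|^2$ lie in $[0,1]$ and sum to $k$, so the expression is maximized (resp.\ minimized) by concentrating weight on the $k$ largest (resp.\ smallest) eigenvalues. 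The second fact is Cauchy interlacing for compressions: if $U\in\mathbb{C}^{n\times r}$ satisfies $U^*U=I_r$, then $\lambda^\da_{j+n-r}(A)\le\lambda^\da_j(U^*AU)\le\lambda^\da_j(A)$ for $1\le j\le r$. These are exactly what the cited references \cite{horn2012matrix,Parlett:1998:SEP:280490} provide, so I would simply record them as preliminaries.

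For the first (max--min) identity, fix an isometry $U\in\mathbb{C}^{n\times m_2}$ and put $B=U^*AU\in\mathbf{H}^{m_2}$. The inner minimization over $V\in\mathbb{C}^{m_2\times(m_2-m_1)}$, $V^*V=I_{m_2-m_1}$, is the Ky Fan minimum principle for $B$, so it equals $\sum_{i=1}^{m_2-m_1}\lambda^\ua_i(B)=\sum_{i=m_1+1}^{m_2}\lambda^\da_i(B)$. Summing the upper interlacing bounds $\lambda^\da_i(B)\le\lambda^\da_i(A)$ over $i=m_1+1,\dots,m_2$ shows this is at most $\sum_{i=m_1+1}^{m_2}\lambda^\da_i(A)$, hence the outer maximum over $U$ is $\le\sum_{i=m_1+1}^{m_2}\lambda^\da_i(A)$. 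Equality is realized by letting the columns of $U$ be orthonormal eigenvectors of $A$ for its $m_2$ largest eigenvalues: then $B=\diag\!\big(\lambda^\da_1(A),\dots,\lambda^\da_{m_2}(A)\big)$ and the inner value is exactly $\sum_{i=m_1+1}^{m_2}\lambda^\da_i(A)$.

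For the second (min--max) identity, fix an isometry $U\in\mathbb{C}^{n\times(n-m_1)}$ and put $B=U^*AU\in\mathbf{H}^{n-m_1}$. The inner maximization over $V\in\mathbb{C}^{(n-m_1)\times(m_2-m_1)}$, $V^*V=I_{m_2-m_1}$, is now the Ky Fan maximum principle, so it equals $\sum_{i=1}^{m_2-m_1}\lambda^\da_i(B)$. Here $B$ is a compression of $A$ to codimension $m_1$, so the lower interlacing bounds read $\lambda^\da_i(B)\ge\lambda^\da_{i+m_1}(A)$; summing over $i=1,\dots,m_2-m_1$ gives that the inner value is at least $\sum_{i=m_1+1}^{m_2}\lambda^\da_i(A)$, hence the outer minimum over $U$ is $\ge\sum_{i=m_1+1}^{m_2}\lambda^\da_i(A)$. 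Equality is realized by letting the columns of $U$ be orthonormal eigenvectors of $A$ for its $n-m_1$ smallest eigenvalues, so that $B=\diag\!\big(\lambda^\da_{m_1+1}(A),\dots,\lambda^\da_n(A)\big)$ and the inner value is exactly $\sum_{i=m_1+1}^{m_2}\lambda^\da_i(A)$. Since the max--min and min--max quantities both equal $\sum_{i=m_1+1}^{m_2}\lambda^\da_i(A)$, the theorem follows. (In the boundary cases $m_1=0$ or $m_2=n$, one of the two optimizations in each identity is over square unitaries and hence constant, so the argument degenerates directly to the Ky Fan principle.)

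I expect the only delicate point to be the bookkeeping in the interlacing step: one must track which way a compression moves eigenvalues --- compressing onto an $m_2$-dimensional subspace can only decrease $\lambda^\da_j$ (bounding it above by $\lambda^\da_j(A)$), while compressing onto a subspace of codimension $m_1$ can only increase it (bounding it below by $\lambda^\da_{j+m_1}(A)$) --- and, consistently, one must use the minimum version of the inner Ky Fan principle in the first identity and the maximum version in the second. Once these index ranges are pinned down, the interlacing equality conditions force the optimal choice of $U$, and nothing further is needed.
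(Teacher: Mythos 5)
Your proof is correct: the reduction of each side to the Ky Fan trace principle for the compression $B=U^*AU$, combined with Cauchy (Poincar\'e) interlacing $\lambda^\da_{j+n-r}(A)\le\lambda^\da_j(U^*AU)\le\lambda^\da_j(A)$ and the explicit eigenvector choices of $U$ that attain equality, is complete, and your index bookkeeping (using the interlacing upper bounds with the inner minimum in the max--min identity, and the lower bounds with the inner maximum in the min--max identity) checks out, as does the conversion $\sum_{i=1}^{m_2-m_1}\lambda^\ua_i(B)=\sum_{i=m_1+1}^{m_2}\lambda^\da_i(B)$. Note that the paper offers no proof of this statement at all---it records it as a known extended Courant--Fischer theorem and defers to the cited references---so there is no in-paper argument to compare against; your route is the standard textbook one those references use, and it is a valid, self-contained justification.
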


\section{Majorization} \label{sec:Majorization}
For any two vectors $\bm{a},\bm{b}\in \mathbb{R}^n$, $\bm{a}$ is said to be weakly majorized by $\bm{b}$, denoted by $\bm{a}\prec_w \bm{b}$, if 
\[\sum_{i=1}^ka^\da_i\leq \sum_{i=1}^kb^\da_i, \quad 1\leq k\leq n;\] 
moreover, a is said to be majorized by $b$, denoted by $\bm{a}\prec \bm{b}$, if equality holds for $k=n$, i.e.
\[\sum_{i=1}^na_i=\sum_{i=1}^nb_i.\] 
The following two lemmas are most important for deriving inequalities from majorization relations. One may refer to \cite{ando1989majorization,marshall1979inequalities,hiai2010matrix} for proofs and more discussions on this topic. 

\begin{lemma}\label{lem:MajorBridge}
For any $\bm{a},\bm{b}\in \mathbb{R}^n$, if $\bm{a}\prec_w \bm{b}$, then there is some $\bm{c} \in \mathbb{R}^n$ such that $\bm{a}\leq \bm{c}\prec \bm{b}$.
\end{lemma}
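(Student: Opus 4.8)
The plan is to construct $\bm{c}$ explicitly by a ``water-filling'' of $\bm{a}$ up to a common level. First I would reduce to the case that $\bm{a}=\bm{a}^\da$ and $\bm{b}=\bm{b}^\da$ are already sorted in descending order: majorization is invariant under permuting either vector, and the entrywise inequality $\bm{a}\le\bm{c}$ is preserved under applying one and the same permutation to $\bm{a}$ and $\bm{c}$, so a solution $\bm{c}$ for the sorted vectors yields one for the originals. Taking $k=n$ in $\bm{a}\prec_w\bm{b}$ records the fact $\sum_{i=1}^n a_i\le\sum_{i=1}^n b_i$, which is the only use of the majorization hypothesis beyond the partial-sum inequalities themselves.

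Next, for $\mu\in\mathbb{R}$ put $h(\mu)=\sum_{i=1}^n\max(a_i,\mu)$. This is continuous and nondecreasing, equals $\sum_i a_i$ once $\mu\le\min_i a_i$, and tends to $+\infty$ as $\mu\to+\infty$; since $\sum_i a_i\le\sum_i b_i$ there is a level $\mu^*$ with $h(\mu^*)=\sum_i b_i$. Set $c_i=\max(a_i,\mu^*)$. Then $c_i\ge a_i$ by construction, $\bm{c}$ is still sorted in descending order since $\bm{a}$ is, and $\sum_i c_i=\sum_i b_i$. Because $\bm{c}$ is in descending order, establishing $\bm{c}\prec\bm{b}$ amounts to checking $\sum_{i=1}^k c_i\le\sum_{i=1}^k b_i$ for every $k<n$.

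To verify these partial-sum inequalities, let $m$ be the number of indices with $a_i\ge\mu^*$, so $c_i=a_i$ for $i\le m$ and $c_i=\mu^*$ for $i>m$. For $k\le m$ the inequality is just $\sum_{i=1}^k a_i\le\sum_{i=1}^k b_i$, which holds by hypothesis. For $m<k<n$, the normalization $\sum_i c_i=\sum_i b_i$ gives $(n-m)\mu^*=\sum_{i=1}^n b_i-\sum_{i=1}^m a_i\ge\sum_{i=m+1}^n b_i$, using $\sum_{i=1}^m a_i\le\sum_{i=1}^m b_i$; hence $\mu^*$ is at least the average of $b_{m+1},\dots,b_n$. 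Since $\bm{b}$ is sorted in descending order, the average of the shorter tail $b_{k+1},\dots,b_n$ is no larger, so $\sum_{i=k+1}^n b_i\le(n-k)\mu^*=\sum_{i=k+1}^n c_i$, which rearranges (using $\sum_i b_i=\sum_i c_i$) to $\sum_{i=1}^k c_i\le\sum_{i=1}^k b_i$. Finally I would undo the sorting permutation to obtain $\bm{c}$ for the original vectors.

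I expect the partial-sum verification to be the only real content: one must see that raising the small coordinates of $\bm{a}$ to the water level $\mu^*$ never pushes a partial sum past the corresponding partial sum of $\bm{b}$, and this is exactly where the descending order of $\bm{b}$ (equivalently, concavity of its partial sums, or the fact that tail averages are small) enters in an essential way. The reduction to sorted vectors, the existence of $\mu^*$, and the elementary properties $c_i\ge a_i$ and $\sum_i c_i=\sum_i b_i$ are all routine.
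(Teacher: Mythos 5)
Your construction is correct, and there is nothing in the paper to compare it against: the author states \Cref{lem:MajorBridge} as a known fact and simply points to the references \cite{ando1989majorization,marshall1979inequalities,hiai2010matrix}, so your water-filling argument supplies a self-contained proof where the paper gives none (it is essentially the classical construction one finds in Marshall--Olkin). The logical skeleton checks out: the reduction to descending $\bm{a},\bm{b}$ is legitimate because $\prec_w$ and $\prec$ depend only on the multisets of entries while $\bm{a}\leq\bm{c}$ is preserved by applying the inverse sorting permutation to $\bm{c}$; the level $\mu^*$ exists by continuity and monotonicity of $h$ together with $\sum_i a_i\leq\sum_i b_i$; and the partial-sum verification is exactly right --- for $k\leq m$ it is the hypothesis, while for $m<k<n$ the normalization gives $(n-m)\mu^*\geq\sum_{i=m+1}^n b_i$, and the fact that tail averages of a descending vector decrease yields $(n-k)\mu^*\geq\sum_{i=k+1}^n b_i$, which rearranges to $\sum_{i=1}^k c_i\leq\sum_{i=1}^k b_i$. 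The boundary situations ($m=0$, $m=n$, ties $a_i=\mu^*$, or $\sum_i a_i=\sum_i b_i$, in which case $\bm{c}=\bm{a}$ works) all pass through the same computation with empty sums, so no extra care is needed; the proof is complete as written.
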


\begin{lemma}\label{lem:MajorKey}
For any $\bm{a},\bm{b}\in \mathbb{R}^n$, $\bm{a}\prec \bm{b}$ if and only if $\bm{a}=D\bm{b}$ for some doubly stochastic matrix $D$, i.e. $D_{ij}\geq 0, 1\leq i,j\leq n$, $\sum_{j=1}^nD_{ij} = 1, 1\leq i\leq n$ and $\sum_{i=1}^nD_{ij} = 1, 1\leq j\leq n$.
\end{lemma}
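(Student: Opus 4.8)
The plan is to prove the two implications of the biconditional separately. The implication $\bm{a}=D\bm{b}\Rightarrow\bm{a}\prec\bm{b}$ is a direct estimate, while the converse is the substantive part, which I would obtain by an inductive ``Robin Hood'' transfer argument. For the easy direction, suppose $\bm{a}=D\bm{b}$ with $D$ doubly stochastic. Summing over all coordinates and using $\sum_i D_{ij}=1$ gives $\sum_i a_i=\sum_j b_j$, so the total sums agree. For the partial-sum inequalities, note that replacing $(\bm{a},\bm{b})$ by $(P_1\bm{a},P_2\bm{b})$ for permutation matrices $P_1,P_2$ replaces $D$ by $P_1DP_2^{-1}$, which is again doubly stochastic; hence we may assume $\bm{a}=\bm{a}^\da$ and $\bm{b}=\bm{b}^\da$. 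Fixing $k$ and setting $c_j:=\sum_{i=1}^k D_{ij}\in[0,1]$, so that $\sum_j c_j=k$ and $\sum_{i=1}^k a_i=\sum_j c_j b_j$, one computes
\[
\sum_{i=1}^k a_i-\sum_{i=1}^k b_i
=\sum_{j=1}^k(c_j-1)b_j+\sum_{j=k+1}^n c_j b_j
\le b_k\Big(\sum_{j=1}^n c_j-k\Big)=0,
\]
using $c_j-1\le 0$ with $b_j\ge b_k$ for $j\le k$, and $c_j\ge 0$ with $b_j\le b_k$ for $j>k$. Thus $\bm{a}\prec\bm{b}$.

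For the converse, assume $\bm{a}\prec\bm{b}$; after permuting both vectors we take $\bm{a}=\bm{a}^\da$ and $\bm{b}=\bm{b}^\da$. I induct on $N:=\#\{i:a_i\neq b_i\}$. If $N=0$, take $D=I_n$. If $N\ge 1$, let $j$ be the \emph{largest} index with $a_j<b_j$ (it exists: at the first index where the two sorted vectors differ, comparing the partial sums there and just before forces the $\bm{a}$-entry to be strictly smaller) and let $k$ be the \emph{smallest} index $>j$ with $a_k>b_k$ (it exists because $\sum a_i=\sum b_i$). By these choices every index $l$ strictly between $j$ and $k$ satisfies both $a_l\ge b_l$ and $a_l\le b_l$, hence $a_l=b_l$; moreover $b_j>a_j\ge a_k>b_k$. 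Put $\varepsilon:=\min(b_j-a_j,\,a_k-b_k)>0$ and let $\bm{b}'$ agree with $\bm{b}$ except that $b'_j=b_j-\varepsilon$ and $b'_k=b_k+\varepsilon$. Then $\bm{b}'=T\bm{b}$ for the $T$-transform $T=(1-t)I_n+tQ$, where $Q$ transposes coordinates $j,k$ and $t=\varepsilon/(b_j-b_k)\in(0,1]$, so $T$ is doubly stochastic. One checks that $\bm{b}'$ is still sorted (here it is crucial that the coordinates between $j$ and $k$ coincide with the corresponding entries of $\bm{a}$), that $\bm{a}\prec\bm{b}'$ (the partial sums of $\bm{b}'$ drop by exactly $\varepsilon$ for $j\le m<k$ and are otherwise unchanged, while $\sum_{i\le j}(b_i-a_i)\ge b_j-a_j\ge\varepsilon$), and that $\bm{b}'$ matches $\bm{a}$ in at least one more coordinate than $\bm{b}$ does without creating any new mismatch. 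By the induction hypothesis $\bm{a}=D'\bm{b}'$ for a doubly stochastic $D'$, whence $\bm{a}=(D'T)\bm{b}$ with $D'T$ doubly stochastic.

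The main obstacle is the inductive step of the converse: choosing the transfer indices $j,k$ carefully enough that the single transposition-type move simultaneously keeps $\bm{b}'$ sorted, preserves the majorization relation with $\bm{a}$, and strictly decreases $N$. Each verification is elementary, but they must be arranged in the right order, and the coincidence of the coordinates strictly between $j$ and $k$ is what makes the sortedness step go through. An alternative that hides this bookkeeping is to invoke Rado's theorem that $\{\bm{x}:\bm{x}\prec\bm{b}\}$ equals the convex hull of $\{P\bm{b}:P\in S_n\}$: writing $\bm{a}=\sum_P\theta_P P\bm{b}$ with $\theta_P\ge 0$, $\sum_P\theta_P=1$ immediately exhibits $\bm{a}=D\bm{b}$ with $D=\sum_P\theta_P P$ a convex combination of permutation matrices, hence doubly stochastic; but this merely relocates the difficulty into identifying the vertices of the permutohedron.
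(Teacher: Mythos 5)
Your proof is correct, but note that the paper does not prove this lemma at all: it is quoted as a standard fact with references to Ando, Marshall--Olkin and Hiai--Petz, so there is no in-paper argument to compare against. What you supply is the classical Hardy--Littlewood--P\'olya proof. The easy direction is the standard estimate: with both vectors sorted decreasingly and $c_j=\sum_{i\le k}D_{ij}$, the bound $\sum_{i\le k}a_i-\sum_{i\le k}b_i\le b_k\big(\sum_j c_j-k\big)=0$ is exactly right, and your reduction to sorted vectors via $D\mapsto P_1DP_2^{-1}$ is legitimate (the same remark is tacitly needed in the converse as well). The converse via $T$-transforms is also sound: your choice of $j$ (largest index with $a_j<b_j$) and $k$ (smallest index $>j$ with $a_k>b_k$) does force $a_l=b_l$ for $j<l<k$ and $b_j>a_j\ge a_k>b_k$, which is what keeps $\bm{b}'$ sorted; the partial sums of $\bm{b}'$ drop by $\varepsilon$ only for $j\le m<k$, where the slack $\sum_{i\le m}(b_i-a_i)\ge b_j-a_j\ge\varepsilon$ holds because the intermediate coordinates coincide; and the choice $\varepsilon=\min(b_j-a_j,a_k-b_k)$ creates at least one new agreement and no new disagreement, so the induction on $N$ terminates and $\bm{a}=(D'T)\bm{b}$ with $D'T$ doubly stochastic. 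One small observation: the paper's use of the lemma (the remark immediately following it, writing $\bm{a}$ as a convex combination of permutations of $\bm{b}$) additionally invokes Birkhoff's theorem that doubly stochastic matrices are convex combinations of permutation matrices; your direct $T$-transform route actually produces $D$ as a product of such convex combinations, so if one wanted the convex-combination form directly, the Rado/Birkhoff route you sketch as an alternative is the one that delivers it without an extra appeal to Birkhoff's theorem.
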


Since any doubly stochastic matrix is a convex combination of permutation matrices, an equivalent statement of \Cref{lem:MajorKey} is that, $\bm{a}\prec \bm{b}$ if and only if $\bm{a}$ is a convex combination of permutations of $\bm{b}$, i.e.
\[\bm{a} = \sum_{j=1}^m\tau_j P_j\bm{b},\]
for some $\{P_j\}_{j=1}^m\subset S_n$ and some $\{\tau_j\}_{j=1}^m\subset[0,1]$ such that $\sum_{j=1}^m\tau_j=1$.

\begin{lemma}\label{lem:MajorSymmetry}
Let $\phi:\mathbb{R}^n\rightarrow \mathbb{R}$ be a convex symmetric form. Then for any $\bm{a},\bm{b}\in \mathbb{R}^n$, $\bm{a}\prec \bm{b}$ implies $\phi(\bm{a})\leq \phi(\bm{b})$. Moreover, if $\phi$ is also monotone, then $\bm{a}\prec_w \bm{b}$ implies $\phi(\bm{a})\leq \phi(\bm{b})$. 
\end{lemma}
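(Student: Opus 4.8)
The plan is to deduce both assertions directly from the two structural lemmas on majorization stated just above, \Cref{lem:MajorBridge} and \Cref{lem:MajorKey}, together with the defining properties of $\phi$, following the classical Hardy--Littlewood--P\'olya scheme.

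First I would treat the case $\bm{a}\prec\bm{b}$. By the equivalent reformulation of \Cref{lem:MajorKey} recorded immediately after its statement (a doubly stochastic matrix is a convex combination of permutation matrices), $\bm{a}\prec\bm{b}$ means $\bm{a}=\sum_{j=1}^m\tau_jP_j\bm{b}$ for some $\{P_j\}_{j=1}^m\subset S_n$ and $\{\tau_j\}_{j=1}^m\subset[0,1]$ with $\sum_{j=1}^m\tau_j=1$. Pairwise convexity of $\phi$ extends by an immediate induction on $m$ to Jensen's inequality for finite convex combinations, and $\phi$ is permutation invariant, so
\[
\phi(\bm{a})=\phi\Big(\textstyle\sum_{j=1}^m\tau_jP_j\bm{b}\Big)\leq\sum_{j=1}^m\tau_j\,\phi(P_j\bm{b})=\sum_{j=1}^m\tau_j\,\phi(\bm{b})=\phi(\bm{b}),
\]
which is the first assertion. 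Next, for $\bm{a}\prec_w\bm{b}$ with $\phi$ additionally monotone, I would invoke \Cref{lem:MajorBridge} to obtain $\bm{c}\in\mathbb{R}^n$ with $\bm{a}\leq\bm{c}\prec\bm{b}$. Monotonicity gives $\phi(\bm{a})\leq\phi(\bm{c})$, the case already settled gives $\phi(\bm{c})\leq\phi(\bm{b})$, and chaining the two yields $\phi(\bm{a})\leq\phi(\bm{b})$.

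There is no serious obstacle: the content is entirely carried by \Cref{lem:MajorBridge} and \Cref{lem:MajorKey}, whose proofs are classical and cited. The only points needing a line of care are the passage from two-point convexity to the $m$-term Jensen inequality (a routine induction, splitting off one weight at a time), and---if one prefers to work with the restriction of $\phi$ to a permutatively invariant subdomain such as $\mathbb{R}^n_+$---checking that all intermediate vectors stay inside it. This holds automatically: permutations and convex combinations preserve $\mathbb{R}^n_+$, and the bridging vector $\bm{c}$ of \Cref{lem:MajorBridge} satisfies $c^\ua_1\geq b^\ua_1\geq 0$ since $\bm{c}\prec\bm{b}$, so $\bm{c}\in\mathbb{R}^n_+$ whenever $\bm{b}\in\mathbb{R}^n_+$.
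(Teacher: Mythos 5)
Your proposal is correct and follows essentially the same route as the paper's own proof: express $\bm{a}\prec\bm{b}$ as a convex combination of permutations of $\bm{b}$ via \Cref{lem:MajorKey}, apply convexity and permutation invariance, and handle $\prec_w$ by inserting the bridging vector from \Cref{lem:MajorBridge} and using monotonicity. The extra remarks on finite Jensen and domain preservation are fine but not needed beyond what the paper already does.
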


\begin{proof} If $\bm{a}\prec \bm{b}$, we have $\bm{a} = \sum_{j=1}^m\tau_j P_j\bm{b}$ for some permutations $\{P_j\}_{j=1}^m\subset S_n$ and some $\{\tau_j\}_{j=1}^m\subset[0,1]$ such that $\sum_{j=1}^m\tau_j=1$. Then using convexity and permutation invariance of $\phi$, we have
\[\phi(\bm{a}) = \phi(\sum_{j=1}^m\tau_j P_j\bm{b})\leq \sum_{j=1}^m\tau_j \phi(P_j\bm{b}) = \sum_{j=1}^m\tau_j \phi(\bm{b}) = \phi(\bm{b}). \]
If $\bm{a}\prec_w \bm{b}$, then by \Cref{lem:MajorBridge} there is some $\bm{c}\in \mathbb{R}^n$ such that $\bm{a}\leq \bm{c}\prec \bm{b}$. Then we have $\phi(\bm{c})\leq \phi(\bm{b})$. Moreover, if $\phi$ is monotone, we have $\phi(\bm{a})\leq \phi(\bm{c})$, and so $\phi(\bm{a})\leq \phi(\bm{b})$.  
\end{proof}

The following lemma is a widely used fact on majorization relations between eigenvalues. We provide the proof here to illustrate the proof techniques for majorization relations.

\begin{lemma}\label{lem:MatrixMajorization}
For any $A,B\in\mathbf{H}^n$, 
\begin{equation}\label{eqt:MatrixMajorization1}
\bm{\lambda}(A+B)\prec \bm{\lambda}(A)+\bm{\lambda}(B).
\end{equation}

\begin{proof} For any Hermitian matrix $A\in\mathbf{H}^n$ and any $1\leq k\leq n$, by \Cref{thm:Courant-Fisher} we have that 
\[\sum_{j=1}^k\lambda^\da_j(A) = \max_{\begin{subarray}{c} U\in \mathbb{C}^{n\times k}\\ U^*U = I_k \end{subarray}}\trace[U^*AU].\]
Therefore, for any $A,B\in\mathbf{H}^n$, we have
\begin{align*}
\sum_{j=1}^k\lambda^\da_j(A+B) =&\ \max_{\begin{subarray}{c} U\in \mathbb{C}^{n\times k}\\ U^*U = I_k \end{subarray}}\trace[U^*(A+B)U]\\
\leq&\ \max_{\begin{subarray}{c} U\in \mathbb{C}^{n\times k}\\ U^*U = I_k \end{subarray}}\trace[U^*AU] + \max_{\begin{subarray}{c} U\in \mathbb{C}^{n\times k}\\ U^*U = I_k \end{subarray}}\trace[U^*BU]\\
=&\ \sum_{j=1}^k\lambda^\da_j(A) + \sum_{j=1}^k\lambda^\da_j(B).
\end{align*}
And obviously we have 
\[\sum_{j=1}^n\lambda^\da_j(A+B)=\trace[A+B]=\trace[A]+\trace[B]= \sum_{j=1}^n\lambda^\da_j(A) + \sum_{j=1}^n\lambda^\da_j(B).\]
Therefore $\bm{\lambda}(A+B)\prec \bm{\lambda}(A)+\bm{\lambda}(B)$.
\end{proof}

\end{lemma}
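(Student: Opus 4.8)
The plan is to verify the two defining conditions of majorization directly: the weak inequalities
\[\sum_{j=1}^k \lambda^\da_j(A+B)\ \leq\ \sum_{j=1}^k\big(\lambda^\da_j(A)+\lambda^\da_j(B)\big),\qquad 1\leq k\leq n,\]
together with equality at $k=n$. The engine will be the Ky Fan maximum principle, which I would extract as the special case $m_1=0$, $m_2=k$ of \Cref{thm:Courant-Fisher}: there the inner variable $V$ ranges over $k\times k$ unitaries, so $\trace[V^*U^*CUV]=\trace[U^*CU]$ carries no $V$-dependence and the nested extremum collapses, leaving
\[\sum_{j=1}^k\lambda^\da_j(C)\ =\ \max_{\begin{subarray}{c}U\in\mathbb{C}^{n\times k}\\U^*U=I_k\end{subarray}}\trace[U^*CU],\qquad C\in\mathbf{H}^n.\]

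Next I would apply this with $C=A+B$ and use linearity of the trace, $\trace[U^*(A+B)U]=\trace[U^*AU]+\trace[U^*BU]$; bounding the maximum of a sum by the sum of the maxima then yields the weak inequality, since maximizing the two summands separately reproduces $\sum_{j=1}^k\lambda^\da_j(A)$ and $\sum_{j=1}^k\lambda^\da_j(B)$. Here I would observe that $\bm{\lambda}^\da(A)+\bm{\lambda}^\da(B)$ is itself already nonincreasing (a coordinatewise sum of two nonincreasing sequences stays nonincreasing), so its $k$-th partial sum equals the right-hand side above; this is exactly what licenses reading the vector $\bm{\lambda}(A)+\bm{\lambda}(B)$ with the appropriate ordering. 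Finally, the equality at $k=n$ is immediate from $\trace[A+B]=\trace[A]+\trace[B]$, and combining it with the weak inequalities gives $\bm{\lambda}(A+B)\prec\bm{\lambda}(A)+\bm{\lambda}(B)$.

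I do not expect any genuine obstacle: this is a classical fact (essentially Ky Fan's inequality, equivalent to a form of Lidskii's theorem), and once the variational formula for $\sum_{j=1}^k\lambda^\da_j$ is in hand the subadditivity step is a single line. The only point deserving a moment's attention is the harmless book-keeping about orderings noted above — making sure the two eigenvalue lists on the right-hand side are aligned in descending order so that their coordinatewise sum is already sorted, after which no further reordering is needed to read off the partial sums.
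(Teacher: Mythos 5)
Your proposal is correct and follows essentially the same route as the paper: the Ky Fan variational formula $\sum_{j=1}^k\lambda^\da_j(C)=\max_{U^*U=I_k}\trace[U^*CU]$ obtained from \Cref{thm:Courant-Fisher}, subadditivity of the maximum for the weak inequalities, and the trace identity for equality at $k=n$. Your extra remark that $\bm{\lambda}^\da(A)+\bm{\lambda}^\da(B)$ is already nonincreasing is a harmless clarification of the ordering convention implicit in the paper's argument.
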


\begin{proof}[\rm\textbf{Proof of \Cref{Prop:ExtensionProperties}}] 
$\phi(A)$ is only a symmetric function of the eigenvalues of $A$, and the eigenvalues of $A$ and $U^*AU$ are the same for any unitary $U$, we thus have $\phi(U^*AU)=\phi(A)$.

For any $A,B\in \mathbf{H}^n$, if $A\succeq B$, then $\bm{\lambda}^\da(A)\geq \bm{\lambda}^\da(B)$ by \Cref{thm:Courant-Fisher}. Therefore if $\phi$ is monotone as a vector symmetric form, we have $\phi(A)=\phi(\bm{\lambda}^\da(A)) \geq \phi(\bm{\lambda}^\da(B))=\phi(B)$.

For any $A,B\in \mathbf{H}^n$ and any $\tau\in [0,1]$, let $C=\tau A+(1-\tau) B$. By \Cref{lem:MajorKey}, we know that 
\[\bm{\lambda}(C) \prec \tau \bm{\lambda}(A)+(1-\tau)\bm{\lambda}(B).\]
Then by \Cref{lem:MajorSymmetry} and the convexity of $\phi$ as a vector symmetric form, we immediately have that 
\[\phi(C)=\phi(\bm{\lambda}(C))\leq \phi\big(\tau \bm{\lambda}(A)+(1-\tau)\bm{\lambda}(B)\big) \leq \tau\phi(A)+(1-\tau)\phi(B).\]
Therefore $\phi$ is also convex on $\mathbf{H}^n$.
\end{proof}

\begin{proof}[\rm\textbf{Proof of \Cref{thm:GenericConcavity}}] 
As we have mentioned, the convex part and the concave part of the theorem are equivalent. We thus only prove the equivalence between $(i\text{*})$ and $(ii\text{*})$. $(i\text{*})\Rightarrow (ii\text{*})$ is trivial, since $\phi_k(\bx) = \sum_{i=1}^kx^\ua_i$ is a monotone, concave symmetric form on $\mathbb{R}^n$ for each $1\leq k\leq n$. 

Now suppose $(ii\text{*})$ is true. Then for any $X,Y\in \Omega$ and any $\tau\in [0,1]$, with $Z=\tau X+(1-\tau)Y$, we have
\[\sum_{i=1}^k\lambda^\ua_i\big(\mathcal{F}(Z)\big) \geq \tau\sum_{i=1}^k\lambda^\ua_i\big(\mathcal{F}(X)\big) + (1-\tau)\sum_{i=1}^k\lambda^\ua_i\big(\mathcal{F}(Y)\big),\quad 1\leq k \leq n.\] 
Let $\bm{a} = \tau\bm{\lambda}\big(\mathcal{F}(X)\big) + (1-\tau)\bm{\lambda}\big(\mathcal{F}(Y)\big)\in \mathbb{R}^n$. Note that for any $1\leq i\leq n$,
\[\big[-\bm{\lambda}\big(\mathcal{F}(Z)\big)\big]^\da_i = -\lambda^\ua_i\big(\mathcal{F}(Z)\big),\quad (-\bm{a})^\da_i = -\tau \lambda^\ua_i\big(\mathcal{F}(X)\big) - (1-\tau)\lambda^\ua_i\big(\mathcal{F}(Y)\big).\]
We thus have $-\bm{\lambda}\big(\mathcal{F}(Z)\big) \prec _w -\bm{a}$. Then by \Cref{lem:MajorBridge} and \Cref{lem:MajorKey}, there exist some $\bm{b}\in \mathbb{R}^n$ and some doubly stochastic matrix $D$ such that $-\bm{\lambda}\big(\mathcal{F}(Z)\big)\leq -\bm{b} = D(-\bm{a})$, or equivalently, 
\[\bm{\lambda}\big(\mathcal{F}(Z)\big)\geq \bm{b} = D\bm{a}.\]
Now for any monotone, concave symmetric form $\phi$ on $\mathbb{R}^n$, we have $\phi\big[\bm{\lambda}\big(\mathcal{F}(Z)\big)\big]\geq \phi(\bm{b})$ due to monotonicity; and $\phi(\bm{b})\geq \phi(\bm{a})$ due to concavity and \Cref{lem:MajorSymmetry}. Also due to concavity of $\phi$ we have
\[\phi(\bm{a}) = \phi\big[\tau\bm{\lambda}\big(\mathcal{F}(X)\big) + (1-\tau)\bm{\lambda}\big(\mathcal{F}(Y)\big)\big]\geq \tau\phi\big[\bm{\lambda}\big(\mathcal{F}(X)\big)\big]+ (1-\tau)\phi\big[\bm{\lambda}\big(\mathcal{F}(Y)\big)\big].\]
Therefore, by the definition we have
\begin{align*}
\phi\big(\mathcal{F}(Z)\big) =&\ \phi\big[\bm{\lambda}\big(\mathcal{F}(Z)\big)\big]\\
\geq&\ \tau\phi\big[\bm{\lambda}\big(\mathcal{F}(X)\big)\big]+ (1-\tau)\phi\big[\bm{\lambda}\big(\mathcal{F}(Y)\big)\big] \\
=&\ \tau\phi\big(\mathcal{F}(X)\big)+ (1-\tau)\phi\big(\mathcal{F}(Y)\big),
\end{align*}
which means $X\mapsto \phi\big(\mathcal{F}(X)\big)$ is concave on $\Omega$.
\end{proof}

\section{Proof of main theorems}
\label{sec:Proofs}
We still need two more lemmas for the proof of our main results. In \Cref{lem:f00}, the infimum is taken over all idempotent matrices of rank $k$. We thus need to use properties of this class of matrices. It is well known that if a matrix is idempotent, then its eigenvalues must be either 0 or 1. Moreover, the following lemma tells that the singular values of a idempotent matrix are either 0, or greater than or equal to 1.
\begin{lemma}\label{lem:Idempotent}
Let $P \in \mathbb{C}^{n\times n}$ be idempotent, i.e. $P^2=P$. Then all non-zero singular values of $P$ are greater than or equal to 1. 
\end{lemma}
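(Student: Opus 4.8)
The plan is to use the elementary fact that an idempotent matrix acts as the identity on its own range, and then to pin down the $r$-th largest singular value of $P$ (where $r=\mathrm{rank}(P)$) via the min-max characterization of eigenvalues.

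First I would record the key observation: if $y$ lies in $\mathcal{R}:=\mathrm{range}(P)$, say $y=Px$, then $Py=P^2x=Px=y$, so $P$ restricts to the identity on $\mathcal{R}$; in particular $\|Py\|=\|y\|$ for every $y\in\mathcal{R}$, and $\dim\mathcal{R}=r$. Since the number of nonzero singular values of $P$ equals $\mathrm{rank}(P)=r$, it suffices to show that the $r$-th largest singular value $\sigma_r(P)$ is at least $1$.

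Next I would apply \Cref{thm:Courant-Fisher} to the Hermitian positive semidefinite matrix $P^*P$, whose eigenvalues are the squares of the singular values of $P$. Taking $m_1=r-1,m_2=r$ gives $\lambda^\da_r(P^*P)=\max_{\dim V=r}\ \min_{0\neq x\in V}\frac{x^*P^*Px}{x^*x}$, and choosing the competitor $V=\mathcal{R}$ yields $\lambda^\da_r(P^*P)\geq\min_{0\neq x\in\mathcal{R}}\frac{\|Px\|^2}{\|x\|^2}=1$ by the previous paragraph. Hence $\sigma_r(P)=\sqrt{\lambda^\da_r(P^*P)}\geq 1$, so $\sigma_1(P)\geq\cdots\geq\sigma_r(P)\geq 1$, and these are exactly the nonzero singular values of $P$, proving the claim.

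There is essentially no obstacle here: the only thing one needs to spot is that $\mathcal{R}$ is precisely the $r$-dimensional subspace to feed into the min-max formula, after which the bound is immediate. (One could alternatively argue directly that $\|Pu\|\geq\mathrm{dist}(u,\ker P)$ together with a dimension count, but the route through $P^*P$ and \Cref{thm:Courant-Fisher} is the cleanest given the tools already in place.)
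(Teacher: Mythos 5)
Your argument is correct, but it takes a genuinely different route from the paper. You exploit the geometric fact that an idempotent acts as the identity on its range, and then feed that $r$-dimensional subspace into the max--min characterization of $\lambda^\da_r(P^*P)$ (\Cref{thm:Courant-Fisher} with $m_1=r-1$, $m_2=r$), getting $\lambda^\da_r(P^*P)\geq 1$ and hence $\sigma_1\geq\cdots\geq\sigma_r\geq 1$; since the number of nonzero singular values equals $\mathrm{rank}(P)=r$, this is exactly the claim. The paper instead works algebraically with the compact SVD $P=U\Sigma V^*$: idempotency gives $\Sigma=\Sigma V^*U\Sigma$, hence $\Sigma^{-1}=V^*U$, and Cauchy--Schwarz yields $|x^*\Sigma^{-1}x|\leq\|Ux\|_2\|Vx\|_2=\|x\|_2^2$, so $\Sigma^{-1}\preceq I_k$, i.e.\ $\Sigma\succeq I_k$. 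Your version is more conceptual and reuses a tool already stated in the paper (\Cref{thm:Courant-Fisher}), while the paper's computation is self-contained and hands you the conclusion directly in the form $\Sigma\succeq I_k$ on the SVD factor, which is the form actually invoked in the proof of \Cref{lem:f00}; the two statements are of course equivalent, so either proof serves the later argument equally well.
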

\begin{proof}
Let $P=U\Sigma V^*$ be the compact singular value decomposition of $P$, where $U,V\in \mathbb{C}^{n\times k}$ satisfy $U^*U = V^*V =I_k$, $\Sigma \in \mathbf{H}_{++}^k$ is diagonal, and $k=rank(P)$. We need to show that $\Sigma \succeq I_k$. Since $P$ is idempotent, we have
\[U\Sigma V^* = P = P^2 = U\Sigma V^*U\Sigma V^*.\]
Using $U^*U = V^*V =I_k$, we obtain that 
\[\Sigma = \Sigma V^*U\Sigma \ \Longrightarrow\ \Sigma^{-1} = V^*U. \]
Then for any $x\in \mathbb{C}^k$, we have
\[|x^*\Sigma^{-1}x| = |x^*V^*Ux|\leq \|Vx\|_2\|Ux\|_2 = \|x\|_2^2.\]
Therefore $\Sigma^{-1}\preceq I_k$, or equivalently, $\Sigma\succeq I_k$.
\end{proof}

The next lemma is a variation of the Courant-Fisher characterization.

\begin{lemma}\label{lem:PartialCompare}
Let $f:\mathbb{R}\rightarrow \mathbb{R}$ be monotone increasing. Then for any $A\in \mathbf{H}^n$ and any $1\leq k\leq n$,
\begin{equation}\label{eqt:PartialCompare}
\sum_{i=1}^{k}\lambda^\da_i\big(f(A)\big) = \max_{\begin{subarray}{c} Q\in \mathbb{C}^{n\times k} \\ Q^*Q = I_k\end{subarray}} \trace[f(Q^*AQ)], 
\quad \sum_{i=1}^k\lambda^\ua_i(f(A)) = \min_{\begin{subarray}{c} Q\in \mathbb{C}^{n\times k} \\ Q^*Q = I_k\end{subarray}} \trace[f(Q^*AQ)].
\end{equation}
\end{lemma}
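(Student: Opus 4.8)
The plan is to prove the two formulas in \eqref{eqt:PartialCompare} in tandem: the second follows from the first applied to the pair $(g,-A)$, where $g(x):=-f(-x)$ is again monotone increasing. Indeed $g(-A)=-f(A)$, so $\lambda^\da_i(g(-A))=-\lambda^\ua_i(f(A))$ for every $i$, and $\trace[g(Q^*(-A)Q)]=-\trace[f(Q^*AQ)]$ for every $Q$ with $Q^*Q=I_k$; substituting these into the first identity turns it into the second. Hence it suffices to prove
\[\sum_{i=1}^{k}\lambda^\da_i\big(f(A)\big) = \max_{\substack{Q\in \mathbb{C}^{n\times k} \\ Q^*Q = I_k}} \trace[f(Q^*AQ)].\]
Since $f$ is monotone increasing we have $\lambda^\da_i(f(A))=f(\lambda^\da_i(A))$ (as recorded in \Cref{sec:Notations&MainResults}), so the left-hand side equals $\sum_{i=1}^k f(\lambda^\da_i(A))$.

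For the inequality $\ge$, pick orthonormal eigenvectors $u_1,\dots,u_n$ of $A$ with $Au_i=\lambda^\da_i(A)u_i$ and set $Q_0:=[\,u_1\ \cdots\ u_k\,]$. Then $Q_0^*Q_0=I_k$ and $Q_0^*AQ_0=\diag\big(\lambda^\da_1(A),\dots,\lambda^\da_k(A)\big)$, so $\trace[f(Q_0^*AQ_0)]=\sum_{i=1}^k f(\lambda^\da_i(A))$; taking $Q=Q_0$ in the maximum gives the bound. For the reverse inequality, fix any $Q\in\mathbb{C}^{n\times k}$ with $Q^*Q=I_k$. Since $Q^*AQ\in\mathbf{H}^k$, we have $\trace[f(Q^*AQ)]=\sum_{i=1}^k f\big(\lambda^\da_i(Q^*AQ)\big)$, so it is enough to show the compression interlacing
\[\lambda^\da_i(Q^*AQ)\le\lambda^\da_i(A),\qquad 1\le i\le k,\]
and then apply the monotonicity of $f$ termwise. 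This interlacing I would read off from \Cref{thm:Courant-Fisher} with $m_1=i-1$, $m_2=i$: in subspace form it says $\lambda^\da_i(A)=\max_{S}\min_{0\ne w\in S}w^*Aw/w^*w$ over $i$-dimensional subspaces $S\subseteq\mathbb{C}^n$, and likewise $\lambda^\da_i(Q^*AQ)=\max_{T}\min_{0\ne v\in T}v^*Q^*AQv/v^*v$ over $i$-dimensional $T\subseteq\mathbb{C}^k$. Because $Q$ has orthonormal columns, $v\mapsto Qv$ is injective and preserves both $\|\cdot\|$ and the relevant quadratic form ($(Qv)^*A(Qv)=v^*Q^*AQv$), so each admissible $T$ is carried to an $i$-dimensional subspace $QT\subseteq\mathbb{C}^n$ on which the two Rayleigh quotients coincide; as $\{QT\}$ is merely a subfamily of all $i$-dimensional subspaces of $\mathbb{C}^n$, the inequality follows. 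Summing the termwise bounds $f(\lambda^\da_i(Q^*AQ))\le f(\lambda^\da_i(A))$ yields $\trace[f(Q^*AQ)]\le\sum_{i=1}^k f(\lambda^\da_i(A))$, and maximizing over $Q$ closes the argument.

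The one point that needs care is that \Cref{thm:Courant-Fisher} controls only \emph{linear} trace forms, whereas here $f$ may be neither convex nor concave. This is why I first extract the individual eigenvalue inequalities $\lambda^\da_i(Q^*AQ)\le\lambda^\da_i(A)$ and only afterwards invoke the monotonicity of $f$ term by term; the weaker information $\sum_{j\le i}\lambda^\da_j(Q^*AQ)\le\sum_{j\le i}\lambda^\da_j(A)$ would not suffice, because $\bm{x}\mapsto\sum_i f(x_i)$ need not be convex when $f$ is only increasing. Everything else is a routine verification.
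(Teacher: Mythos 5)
Your proof is correct and follows essentially the same route as the paper's: the compression inequality $\lambda^\da_i(Q^*AQ)\le\lambda^\da_i(A)$ obtained from \Cref{thm:Courant-Fisher}, pushed through the monotone $f$ term by term, together with the choice $Q_0$ of the top-$k$ eigenvectors to attain equality. The only cosmetic difference is that you deduce the min identity from the max identity via $g(x)=-f(-x)$ applied to $-A$, whereas the paper simply notes that the second identity has a ``totally parallel'' proof.
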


\begin{proof}
We only prove the first identity in \Cref{eqt:PartialCompare}. The proof of the second identity is totally parallel. For any $Q\in \mathbb{C}^{n\times k}, Q^*Q = I_k$ and any $V\in \mathbb{C}^{k\times i}, V^*V = I_i$ with $i\leq k\leq n$, we have $QV\in \mathbb{C}^{n\times i}, (QV)^*QV = I_i$. Thus by \Cref{thm:Courant-Fisher} we have
\begin{align*}
\lambda^\da_i(Q^*AQ) = \max_{\begin{subarray}{c} V\in \mathbb{C}^{k\times i} \\ V^*V = I_i\end{subarray}} \min_{\begin{subarray}{c} \bx\in \mathbb{C}^{i} \\ \bx^*\bx=1 \end{subarray}} \trace[\bx^*V^*Q^*AQV\bx] \leq \max_{\begin{subarray}{c} U\in \mathbb{C}^{n\times i} \\ U^*U = I_i\end{subarray}} \min_{\begin{subarray}{c} \bx\in \mathbb{C}^{i} \\ \bx^*\bx=1 \end{subarray}} \trace[\bx^*U^*AU\bx] = \lambda^\da_i(A).
\end{align*}
Since $f$ is monotone increasing, we obtain that 
\[\trace[f(Q^*AQ)] = \sum_{i=1}^k\lambda^\da_i\big(f(Q^*AQ)\big) = \sum_{i=1}^kf\big(\lambda^\da_i(Q^*AQ)\big)\leq \sum_{i=1}^kf\big(\lambda^\da_i(A)\big) = \sum_{i=1}^k\lambda^\da_i\big(f(A)\big).\]
In particular, if we choose $Q=[\bm{q}_1,\dots,\bm{q}_k]\in\mathbb{C}^{n\times k}$ to be the orthonormal eigenvectors of $A$ corresponding to the eigenvalues $\lambda^\da_1(A),\dots,\lambda^\da_k(A)$, we have exactly $\trace[f(Q^*AQ)] = \sum_{i=1}^k\lambda^\da_i\big(f(A)\big)$. Therefore we have
\[\sum_{i=1}^{k}\lambda^\da_i\big(f(A)\big) = \max_{\begin{subarray}{c} Q\in \mathbb{C}^{n\times k} \\ Q^*Q = I_k\end{subarray}} \trace[f(Q^*AQ)].\]
\end{proof}

\begin{proof}[\rm\textbf{Proof of \Cref{lem:f00}}] 
Let $\mathcal{G}_k =\{G\in\mathbb{C}^{n\times n}:G^2=G,\mathrm{rank}(G)=k\}$. We first prove identity \eqref{eqt:f00} with ``$\inf$'' replaced by ``$\min$'', for any invertible $M$. We need to show that the inequality 
\begin{equation}\label{eqt:ineqt1}
\sum_{i=1}^k \lambda^\ua_i\big(f(M^*AM)\big) \leq \trace\big[f(M^*G^*AGM)\big]
\end{equation}
holds for any $G\in\mathcal{G}_k$. We define 
\[P = M^{-1}GM.\]
Since $G^2 = G$, we have $P^2 = M^{-1}GMM^{-1}GM = M^{-1}G^2M = P$. That is, P is idempotent. Also we have $\mathrm{rank}(P) = \mathrm{rank}(G)=k$. Let $P=U\Sigma V^*$ be the compact singular value decomposition of $P$, where $U,V\in \mathbb{C}^{n\times k}$ satisfy $U^*U = V^*V =I_k$, and $\Sigma \in \mathbf{H}_{++}^k$ is diagonal. By \Cref{lem:Idempotent}, we know $\Sigma \succeq I_k$. Then we have
\begin{align*}
\trace\big[f(M^*G^*AGM)\big] =&\ \trace\big[f(P^*M^*AMP)\big]\\
=&\ \trace\big[f(V\Sigma U^*M^*AMU\Sigma V^*)\big]\\
=&\ \trace\big[f(A^\frac{1}{2}MU\Sigma V^*V\Sigma U^*M^*A^\frac{1}{2})\big]\\
=&\ \trace\big[f(A^\frac{1}{2}MU\Sigma^2U^*M^*A^\frac{1}{2})\big]
\end{align*}
We have used the fact that $\trace[f(X^*X)]=\trace[f(XX^*)]$ for any $X\in \mathbb{C}^{n\times m}$, since the spectrum of $X^*X$ and the spectrum of $XX^*$ may only differ by some zeros, but we have $f(0)=0$. Since $\Sigma\succeq I_k$ and $\Sigma$ is diagonal, we have $\Sigma^2\succeq I_k$, and thus 
\[A^\frac{1}{2}MU\Sigma^2U^*M^*A^\frac{1}{2}\succeq A^\frac{1}{2}MUU^*M^*A^\frac{1}{2}.\]
Since $f$ is monotone increasing, by \Cref{prop:Composition}, $\trace[f(\cdot)]$ is a monotone symmetric form. Therefore we obtain
\[\trace\big[f(A^\frac{1}{2}MU\Sigma^2U^*M^*A^\frac{1}{2})\big]\geq \trace\big[f(A^\frac{1}{2}MUU^*M^*A^\frac{1}{2})\big] = \trace\big[f(U^*M^*AMU)\big].\]
Again since $f$ is monotone increasing, by \Cref{lem:PartialCompare}, we have
\[\trace\big[f(U^*M^*AMU)\big]\geq \sum_{i=1}^k\lambda^\ua_i\big(f(M^*AM)\big).\]
So we have proved inequality \eqref{eqt:ineqt1}. We then need to find some $G\in \mathcal{G}_k$ so that the equality in \eqref{eqt:ineqt1} holds. In fact, we can choose $G = MQQ^*M^{-1}$, where $Q=[\bm{q}_1,\dots,\bm{q}_k]\in \mathbb{C}^{n\times k},Q^*Q=I_k$ and $\bm{q}_i$ is the normalized eigenvector of $M^*AM$ corresponding to the eigenvalue $\lambda^\ua_i(M^*AM)$. It is easy to see that $\mathrm{rank}(G)=k$ and $G^2 = MQQ^*M^{-1}MQQ^*M^{-1} = MQQ^*M^{-1} = G$. Moreover, we have
\[M^*G^*AGM = M^*(M^*)^{-1}QQ^*M^*AMQQ^*M^{-1}M = QQ^*M^*AMQQ^*,\]
and thus
\begin{align*}
\trace\big[f(M^*G^*AGM)\big] =&\ \trace\big[f(QQ^*M^*AMQQ^*)\big] \\
=&\ \trace\big[f(Q^*M^*AMQ)\big] \\
=&\ \sum_{i=1}^k \lambda^\ua_i\big(f(M^*AM)\big).
\end{align*}

Next, we will prove identity \eqref{eqt:f00} for a general $M$ that is not necessarily invertible. For any $M\in \mathbb{C}^{n\times n}$, we can always find a sequence $\{M_j\}_{j=1}^{+\infty}\subset\mathbb{C}^{n\times n}$ such that (i) $M_j\rightarrow M$ entry-wisely as $j\rightarrow +\infty$, (ii) each $M_j$ is invertible, and (iii) $M_jM_j^*\succeq MM^*$. Such sequence $\{M_j\}_{j=1}^{+\infty}$ can be easily obtained by only modifying the singular values of $M$. Note that $f$ is continuous since it is convex; ordered eigenvalues and trace are also continuous on $\mathbf{H}^n$. Therefore, for any $G\in\mathcal{G}_k$, we have
\begin{align*}
\sum_{i=1}^k \lambda^\ua_i\big(f(M^*AM)\big) =&\ \lim_{j\rightarrow +\infty}\sum_{i=1}^k \lambda^\ua_i\big(f(M_j^*AM_j)\big) \\
\leq&\  \lim_{j\rightarrow +\infty}\trace\big[f(M_j^*G^*AGM_j)\big] \\
=&\ \trace\big[f(M^*G^*AGM)\big].
\end{align*}
Moreover, for each $M_j$, there is some $G_j\in \mathcal{G}_k$ such that $\trace\big[f(M_j^*G_j^*AG_jM_j)\big] = \sum_{i=1}^k \lambda^\ua_i\big(f(M_j^*AM_j)\big)$. 
Thus we have
\begin{align*}
\sum_{i=1}^k \lambda^\ua_i\big(f(M^*AM)\big) \leq&\ \trace\big[f(M^*G_j^*AG_jM)\big]\\
=&\ \trace\big[f(A^\frac{1}{2}G_jMM^*G_j^*A^\frac{1}{2})\big]\\
\leq&\ \trace\big[f(A^\frac{1}{2}G_jM_jM_j^*G_j^*A^\frac{1}{2})\big]\\
=&\ \trace\big[f(M_j^*G_j^*AG_jM_j)\big]\\
=&\ \sum_{i=1}^k \lambda^\ua_i\big(f(M_j^*AM_j)\big).
\end{align*}
Then again since $\sum_{i=1}^k \lambda^\ua_i\big(f(M^*AM)\big) = \lim_{j\rightarrow +\infty}\sum_{i=1}^k \lambda^\ua_i\big(f(M_j^*AM_j)\big)$, we must have
\[\sum_{i=1}^k \lambda^\ua_i\big(f(M^*AM)\big) = \lim_{j\rightarrow +\infty} \trace\big[f(M^*G_j^*AG_jM)\big],\]
and so identity \eqref{eqt:f00} is proved.
\end{proof}

\begin{proof}[\rm\textbf{Proof of \Cref{lem:fNI0}}] 
Since $f$ is monotone increasing on $\mathbb{R}$, we have $f(x)\geq f(-\infty) = 0$ for all $x\in \mathbb{R}$, and thus $f(X)\in\mathbf{H}_+^n$ for all $X\in \mathbf{H}^n$. Let $\mathcal{H}_k = \{H\in \mathbf{H}^n:\mathrm{rank}(H)=n-k\}$. For any $H\in \mathcal{H}_k$, since $\dim\mathrm{Null}(H) = k$, we can always find some $U\in \mathbb{C}^{n\times k}$ such that $U^*U = I_k$ and $HU = \bm{0}$. Then by \Cref{lem:PartialCompare} we have
\[\trace\big[f(H+A)\big]\geq \sum_{i=1}^k \lambda^\da_i\big(f(H+A)\big)\geq \trace[f(U^*(H+A)U)] = \trace[f(U^*AU)]\geq \sum_{i=1}^k\lambda^\ua_i\big(f(A)\big).\] 
Next we need to show that for arbitrary small $\epsilon>0$, there is some $H_\delta \in \mathcal{H}_k$ such that 
\[\trace\big[f(H_\delta+A)\big]\leq \sum_{i=1}^k\lambda^\ua_i\big(f(A)\big) + \epsilon.\]
Let $A = Q\Lambda Q^*$ be the an eigenvalue decomposition of $A$, where $Q\in \mathbb{R}^{n\times n}$ is unitary, and $\Lambda$ is diagonal with ascending diagonal entries $\lambda^\ua_1(A),\dots,\lambda^\ua_n(A)$. We then take $H_\delta = Q\Lambda_\delta Q^*$, where $\Lambda_\delta$ is also diagonal, and the $i_{\text{th}}$ diagonal entry of $\Lambda_\delta$ is 
\[0\quad \text{if}\quad i\leq k;\quad \text{or}\quad -\delta - \lambda^\ua_i(A)\quad \text{if}\quad i>k.\]
When $\delta$ is large enough, we can have $-\delta - \lambda^\ua_i(A)<0$ for all $k<i\leq n$, and thus $H_\delta\in \mathcal{H}_k$. And we have
\[\trace\big[f(H_\delta+A)\big] = \trace\big[f(Q(\Lambda_\delta+\Lambda)Q^*)\big] =  \trace\big[f(\Lambda_\epsilon+\Lambda)\big] = (n-k)f(-\delta)+\sum_{i=1}^k\lambda^\ua_i\big(f(A)\big).\] 
Since $f(x\rightarrow -\infty) = 0$, we can always choose $\delta$ large enough so that $H_\delta\in \mathcal{H}_k$ and $(n-k)f(-\delta)\leq \epsilon$. So we have proved identity \eqref{eqt:fNI0}.
\end{proof}

\begin{proof}[\rm\textbf{Proof of \Cref{thm:GeneralLiebConcavity}}] 
We only need to show that 
\begin{equation}\label{eqt:lambdafunction1}
(A,B) \ \longmapsto\ \sum_{i=1}^k\lambda^\ua_i\big((B^\frac{qs}{2}K^*A^{ps}KB^\frac{qs}{2})^{\frac{1}{s}}\big) 
\end{equation}
is jointly concave on $\mathbf{H}_+^m\times\mathbf{H}_+^n$ for all $1\leq k\leq n$. According to Theorem 3.2 in \cite{huang2019generalizing}, for any $L\in \mathbb{C}^{m\times n}$, the function 
\[(A,B) \ \longmapsto\ \trace\big[(B^\frac{qs}{2}L^*A^{ps}LB^\frac{qs}{2})^{\frac{1}{s}}\big] \]
is jointly concave on $\mathbf{H}_+^m\times\mathbf{H}_+^n$. Thus for any $A_1,B_1\in \mathbf{H}_+^m,A_2,B_2\in \mathbf{H}_+^n$ and any $\tau \in [0,1]$, with $C_i=\tau A_i+(1-\tau)B_i,i=1,2$, we have
\begin{align*}
\sum_{i=1}^k\lambda^\ua_i\big((C_2^\frac{qs}{2}K^*C_1^{ps}KC_2^\frac{qs}{2})^{\frac{1}{s}}\big) =&\ \inf_{\begin{subarray}{c}G\in\mathbb{C}^{n\times n},G^2=G\\\mathrm{rank}(G)=k\end{subarray}} \trace\big[(C_2^\frac{qs}{2}G^*K^*C_1^{ps}KGC_2^\frac{qs}{2})^{\frac{1}{s}}\big]\\
\geq &\ \inf_{\begin{subarray}{c}G\in\mathbb{C}^{n\times n},G^2=G\\\mathrm{rank}(G)=k\end{subarray}} \Big\{\tau\trace\big[(A_2^\frac{qs}{2}G^*K^*A_1^{ps}KGA_2^\frac{qs}{2})^{\frac{1}{s}}\big] \\
&\ \qquad\qquad\qquad\quad+(1-\tau)\trace\big[(B_2^\frac{qs}{2}G^*K^*B_1^{ps}KGB_2^\frac{qs}{2})^{\frac{1}{s}}\big]\Big\}\\
\geq &\ \tau \inf_{\begin{subarray}{c}G\in\mathbb{C}^{n\times n},G^2=G\\\mathrm{rank}(G)=k\end{subarray}} \trace\big[(A_2^\frac{qs}{2}G^*K^*A_1^{ps}KGA_2^\frac{qs}{2})^{\frac{1}{s}}\big] \\
&\ + (1-\tau) \inf_{\begin{subarray}{c}G\in\mathbb{C}^{n\times n},G^2=G\\\mathrm{rank}(G)=k\end{subarray}} \trace\big[(B_2^\frac{qs}{2}G^*K^*B_1^{ps}KGB_2^\frac{qs}{2})^{\frac{1}{s}}\big] \\
=&\ \tau \sum_{i=1}^k\lambda^\ua_i\big((A_2^\frac{qs}{2}K^*A_1^{ps}KA_2^\frac{qs}{2})^{\frac{1}{s}}\big) + (1-\tau) \sum_{i=1}^k\lambda^\ua_i\big((B_2^\frac{qs}{2}K^*B_1^{ps}KB_2^\frac{qs}{2})^{\frac{1}{s}}\big).
\end{align*}
We have used formula \eqref{eqt:f00} from \Cref{lem:f00} with $f(x) = x^\frac{1}{s}$, which is monotone increasing on $\mathbb{R}_+$ and satisfies $f(0)=0$. So we have proved the concavity of \eqref{eqt:lambdafunction1} for all $1\leq k\leq n$. The concavity of \eqref{eqt:function1} then follows from \Cref{thm:GenericConcavity} with 
\[\mathcal{F}:\mathbf{H}_+^m\times\mathbf{H}_+^n\longrightarrow \mathbf{H}_+^n,\quad \mathcal{F}(A,B) = (B^\frac{qs}{2}K^*A^{ps}KB^\frac{qs}{2})^{\frac{1}{s}}.\]
\end{proof}

\begin{proof}[\rm\textbf{Proof of \Cref{thm:GeneralLiebConcavity}}] 
We only need to show that 
\begin{equation}\label{eqt:lambdafunction2}
(A_1,A_2,\dots,A_m) \ \longmapsto\ \sum_{i=1}^k\lambda^\ua_i\big(\exp\big(H+\sum_{j=1}^mp_j\log A_j\big)\big) 
\end{equation}
is jointly concave on $(\mathbf{H}_{++}^n)^{\times m}$ for all $1\leq k\leq n$. According to Corollary 6.1 in \cite{LIEB1973267}(see also Theorem 3.3 in \cite{huang2019generalizing}), for any $L\in \mathbf{H}^n$, the function 
\[(A_1,A_2,\dots,A_m) \ \longmapsto\ \trace\big[\exp\big(L+\sum_{j=1}^mp_j\log A_j\big)\big]  \]
is jointly concave on $(\mathbf{H}_{++}^n)^{\times m}$. Thus for any $(A_1,A_2,\dots,A_m),(B_1,B_2,\dots,B_m)\in (\mathbf{H}_{++}^n)^{\times m}$ and any $\tau \in [0,1]$, with $C_i=\tau A_i+(1-\tau)B_i,i=1,\dots,m$, we have
\begin{align*}
\sum_{i=1}^k\lambda^\ua_i\big(\exp\big(H+\sum_{j=1}^mp_j\log C_j\big)\big)  =&\ \inf_{\begin{subarray}{c}M\in\mathbf{H}^n\\\mathrm{rank}(M)=n-k\end{subarray}} \trace\big[\exp\big(M+H+\sum_{j=1}^mp_j\log C_j\big)\big]\\
\geq &\ \inf_{\begin{subarray}{c}M\in\mathbf{H}^n\\\mathrm{rank}(M)=n-k\end{subarray}} \Big\{\tau\trace\big[\exp\big(M+H+\sum_{j=1}^mp_j\log A_j\big)\big] \\
&\ \qquad\qquad\qquad\quad+(1-\tau)\trace\big[\exp\big(M+H+\sum_{j=1}^mp_j\log B_j\big)\big]\Big\}\\
\geq &\ \tau \inf_{\begin{subarray}{c}M\in\mathbf{H}^n\\\mathrm{rank}(M)=n-k\end{subarray}} \trace\big[\exp\big(M+H+\sum_{j=1}^mp_j\log A_j\big)\big] \\
&\ + (1-\tau) \inf_{\begin{subarray}{c}M\in\mathbf{H}^n\\\mathrm{rank}(M)=n-k\end{subarray}} \trace\big[\exp\big(M+H+\sum_{j=1}^mp_j\log B_j\big)\big] \\
=&\ \tau \sum_{i=1}^k\lambda^\ua_i\big(\exp\big(H+\sum_{j=1}^mp_j\log A_j\big)\big) \\
&\ + (1-\tau) \sum_{i=1}^k\lambda^\ua_i\big(\exp\big(H+\sum_{j=1}^mp_j\log B_j\big)\big).
\end{align*}
We have used formula \eqref{eqt:fNI0} from \Cref{lem:fNI0} with $f(x) = \exp(x)$, which is monotone increasing on $\mathbb{R}$ and satisfies $f(x\rightarrow-\infty)=0$. So we have proved the concavity of \eqref{eqt:lambdafunction2} for all $1\leq k\leq n$. The concavity of \eqref{eqt:function2} then follows from \Cref{thm:GenericConcavity} with 
\[\mathcal{F}:(\mathbf{H}_{++}^n)^{\times m}\longmapsto \mathbf{H}_{++}^n,\quad \mathcal{F}(A_1,A_2,\dots,A_m) = \exp\big(H+\sum_{j=1}^mp_j\log A_j\big).\]
\end{proof}

\section*{Acknowledgment}
The research was in part supported by the NSF Grant DMS-1613861. The author would like to thank Thomas Y. Hou for his wholehearted mentoring and supporting.

\newpage
\bibliographystyle{elsarticle-num}
\bibliography{reference}

\end{document}